\def\R{\mathbb R}
\def\t{\mathbf t}
\def\n{\mathbf n}
\def\N{\mathbf N}
\def\b{\mathbf b}
\newtheorem{theorem}{Theorem}[section]
\newtheorem{definition}[theorem]{Definition}
\newtheorem{lemma}[theorem]{Lemma}
\newtheorem{remark}[theorem]{Remark}
\newtheorem{proposition}[theorem]{Proposition}
\title{Translation surfaces in Euclidean space with constant Gaussian curvature }
\author{Thomas Hasanis\\
Department of Mathematics\\
               University of Ioannina\\
               45110 Ioannina, Greece\\
\texttt{thasanis@cc.uoi.gr}
\and
Rafael L\'opez\footnote{Partially
supported by MEC-FEDER
 grant no. MTM2014-52368-P}\\
 Departamento de Geometr\'{\i}a y Topolog\'{\i}a\\ Instituto de Matem\'aticas (IEMath-GR)\\
 Universidad de Granada\\
 18071 Granada, Spain\\
\texttt{rcamino@ugr.es}}
\date{}
\begin{document}
\maketitle 

\begin{abstract}
We prove that the only surfaces in $3$-dimensional Euclidean space $\R^3$ with constant Gaussian curvature $K$ and constructed by the sum of two space curves  are cylindrical surfaces, in particular,  $K=0$.

\end{abstract}

\noindent {\it Keywords:} translation surface, constant Gaussian curvature \\
{\it AMS Subject Classification:} 53A05,  53C42 

\section{Introduction and statement of the result}

This paper  is concerned with the next problem in  classical differential geometry:
\begin{quote}{\it  What are the surfaces of Euclidean space $\R^3$ with constant Gaussian curvature that are the sum of two space curves?}
\end{quote}

The historical motivation of our problem comes from the classical text of G. Darboux \cite[Livre I]{da} where  the so-called \emph{surfaces d\'efinies par des propri\'et\'es cin\'ematiques} are considered, and later known as Darboux surfaces  in the literature. A Darboux surface is   defined kinematically as a  the movement of a curve by a uniparametric family of rigid motions of $\R^3$. Then  a parametrization of a such surface is $\Psi(s,t)=A(t)\alpha(s)+\beta(t)$ where $\alpha$ and $\beta$ are two space curves and $A(t)$ is an orthogonal matrix. In the case that we are considering in this paper, $A(t)$ is the identity. To be precise, we give the next definition: 
 
\begin{definition}
A surface $S\subset \R^3$ is called a   translation surface if it can be locally written as the sum $\Psi(s,t)=\alpha(s)+\beta(t)$ of two space curves  $\alpha:I\subset \R\rightarrow \R^3$ and $\beta:J\subset \R\rightarrow \R^3$. In the case where  $\alpha, \beta$ are plane curves lying on orthogonal planes, the surface is called a translation surface of plane type.
\end{definition}

The curves $\alpha$ and $\beta$ are called the generating curves of $S$.  Darboux deals with translation surfaces in Sects 81--84 \cite[pp. 137--142]{da}. The name of translation surface is because the surface obtained by the translation of $\alpha$ along $\beta$ (or {\it vici-versa} because the roles of $\alpha$ and $\beta$ are interchanged) and thus  all parametric curves $s=\mbox{const.}$ are congruent by translations   (similarly for parametric curves $t=\mbox{const.}$). 
 
For minimal surfaces,   Scherk proved  in 1835 that the only non-planar minimal  surface of type $z=f(x)+g(y)$ for two smooth functions $f$ and $g$   is   
$$    z(x,y)=\frac{1}{c}\log \left|\frac{\cos cy}{\cos cx}\right|,$$
where $c$ is a non-zero constant (\cite{sc}). A surface $z=f(x)+g(y)$ can be viewed as the sum of the planar curves $x\rightarrow (x,0,f(x))$ and $y\rightarrow (0,y,g(y))$, hence, the Scherk surface is the only non-planar minimal translation surface of plane type. Motivated by this example,  it is natural to ask what are the    translation surfaces that are minimal surfaces. It was proved in 1998 that when  one of the two generating curves of $S$  is planar, then $S$ is  the Scherk surface  (\cite{Dillen98}). More recently, the second author and O. Perdomo have characterized all minimal translation surfaces of $\R^3$ in terms of the curvature and torsion of the generating curves (\cite{lp}).

The problem of classification of   translation surfaces with constant Gaussian curvature $K$ is less known. A first example of a translation surface with constant Gaussian curvature $K=0$ is a cylindrical surface. Recall that a   cylindrical surface $S\subset\R^3$ is a ruled surface whose rulings are parallel to a constant direction. Then a cylindrical surface is a translation surface of plane type where one generating curve   is a  ruling and the other one   is a section of $S$ with a plane normal to the  rulings. Moreover, a cylindrical surface has  zero Gaussian curvature.  

The progress on  translation surfaces with constant Gaussian curvature $K$  has been as follows:

\begin{enumerate}
  \item If $S$ is a translation surface of plane type with constant Gaussian curvature $K$, then $K=0$ and $S$ is cylindrical (\cite{HLin}).
  \item The only translation surfaces with constant Gaussian curvature $K=0$ are cylindrical surfaces (\cite{Lopez2}).
  \item There are no translation surfaces with constant Gaussian curvature $K\neq0$ if one of the generating curves is a plane curve (\cite{Lopez2}).
\end{enumerate}

In the present paper we answer the initial problem and we classify all translation surfaces with constant Gaussian curvature. More precisely we prove:

\begin{theorem}\label{t1}
Cylindrical surfaces are the only translation surfaces in $\R^3$ with constant Gaussian curvature.
\end{theorem}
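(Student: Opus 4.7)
The plan is to translate the constant Gaussian curvature condition into a single scalar equation in the Frenet data of the two generators and then exploit the previously established results to reduce to a single hypothetical case, which we aim to rule out by iterated differentiation.

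Parametrize $\alpha$ and $\beta$ by arc length, and let $(\t_i,\n_i,\b_i)$ denote the Frenet frames with curvatures $\kappa_i$ and torsions $\tau_i$ for $i=1,2$. Since $\Psi_{st}=0$, the second fundamental form is diagonal. With the unit normal $N=(\alpha'\times\beta')/\sqrt{1-F^{2}}$, where $F=\langle\alpha',\beta'\rangle$, and using $\alpha''=\kappa_1\n_1$, $\beta''=\kappa_2\n_2$, a direct computation gives
\[
e=-\frac{\kappa_1\,\langle\b_1,\beta'\rangle}{\sqrt{1-F^{2}}},\qquad g=-\frac{\kappa_2\,\langle\alpha',\b_2\rangle}{\sqrt{1-F^{2}}},\qquad f=0.
\]
Hence the condition that the Gaussian curvature equals the constant $K$ is the master equation
\[
\kappa_1(s)\,\kappa_2(t)\,\langle\b_1(s),\beta'(t)\rangle\,\langle\alpha'(s),\b_2(t)\rangle \;=\; K\,\bigl(1-F(s,t)^{2}\bigr)^{2}. \tag{$\ast$}
\]

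\emph{Reductions.} If $\kappa_1$ or $\kappa_2$ vanishes on an open interval, the corresponding generator is a straight line and the surface is cylindrical, so we may assume $\kappa_1,\kappa_2\neq 0$. By result~(2), every $K=0$ translation surface is cylindrical, so we may assume $K\neq 0$. By result~(3), neither generator can be planar in this situation, hence $\tau_1$ and $\tau_2$ are not identically zero; after restricting to a smaller open set we assume $\tau_i\neq 0$ everywhere. The task is now to show ($\ast$) cannot hold under the standing assumptions $K\neq 0$, $\kappa_i\neq 0$, $\tau_i\neq 0$.

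\emph{Differentiation strategy.} Write the left-hand side of ($\ast$) as $U(s,t)V(s,t)$ with $U=\kappa_1\langle\b_1,\beta'\rangle$ and $V=\kappa_2\langle\alpha',\b_2\rangle$. The Frenet--Serret formulas yield $\partial_t U=\kappa_1\kappa_2\langle\b_1,\n_2\rangle$ and $\partial_s V=\kappa_1\kappa_2\langle\n_1,\b_2\rangle$, while $\partial_s U$ and $\partial_t V$ introduce terms in $\tau_i$ and $\kappa_i'$. Taking successive mixed derivatives of ($\ast$) and substituting these identities produces a growing system of algebraic relations among the cross-frame inner products $\langle X_i(s),Y_j(t)\rangle$ for $X,Y\in\{\t,\n,\b\}$. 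A convenient device is to fix $s_0$ and expand $\beta'(t)$ in the basis $(\t_1(s_0),\n_1(s_0),\b_1(s_0))$, reducing ($\ast$) to a polynomial identity in three scalar functions of $t$; letting $s_0$ vary then gives an overdetermined compatibility system that, after careful bookkeeping, is expected to force $\tau_1\equiv 0$ or $\tau_2\equiv 0$, contradicting the standing hypothesis.

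\emph{Main obstacle.} The principal difficulty is algebraic: the right-hand side of ($\ast$) couples $s$ and $t$ nonlinearly through $F^{2}$, while the left-hand side, although factored, still mixes two distinct Frenet frames, so each differentiation roughly doubles the number of independent scalar unknowns. A secondary difficulty is the exceptional locus where $\langle\b_1,\beta'\rangle$ or $\langle\alpha',\b_2\rangle$ vanishes on an open set; there ($\ast$) forces $F^{2}=1$, i.e.\ $\alpha'\parallel\beta'$, where the parametrization is singular, and a separate Frenet-frame argument is needed to rule out that such a vanishing can persist. Managing the derivative chain efficiently and disposing of these degenerate sub-cases is where the real work of the proof will lie.
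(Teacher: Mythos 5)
Your setup and reductions are correct and agree with the paper's: the master equation $(\ast)$ is exactly the paper's $K=LN/\sin^2\phi$ with $L=-k_\alpha\langle\mathbf{b}_\alpha,\mathbf{t}_\beta\rangle/\sin\phi$, $N=k_\beta\langle\mathbf{t}_\alpha,\mathbf{b}_\beta\rangle/\sin\phi$, and invoking the known results to reduce to the case $K\neq0$ with both generators non-planar is legitimate (the paper re-proves the planar case as its Proposition 3.2). But from that point on your proposal is a plan, not a proof: you state that iterated differentiation of $(\ast)$ is ``expected to force'' $\tau_1\equiv0$ or $\tau_2\equiv0$, and you correctly identify that each differentiation multiplies the number of cross-frame unknowns $\langle X_i(s),Y_j(t)\rangle$ — which is precisely why a naive derivative chain does not close up. The entire difficulty of the theorem lives in this step, and the proposal supplies no mechanism for closing the system.

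What is missing, concretely, is the structural input that makes the elimination finite. The paper works with the angle $\phi$ between the tangents ($F=\cos\phi$) and first proves, via the Codazzi equations combined with $\phi_{uv}=-K\sin\phi$, that $L^2+\phi_u^2=A(u)^2$ and $N^2+\phi_v^2=B(v)^2$ are functions of \emph{one} variable each (they are the squared curvatures of the generators). This, together with the expression of the torsion $\tau(u)$ in terms of $\phi$, $L$, $A$ (Lemma 2.1), converts the twice-differentiated curvature equation into a single relation $P\sin2\phi+Q\cos2\phi+R=0$ whose coefficients are polynomials in $\phi_u$ with coefficients depending on $u$ alone. One more $u$-derivative plus elimination of $\cos2\phi$ between two quadratics yields a polynomial identity in $\phi_u$ whose coefficients depend only on $u$; since $\phi_u$ cannot be a function of $u$ alone (that would give $\phi_{uv}=0$, hence $K=0$), \emph{all} coefficients must vanish, and the top ones force $\tau^2=(A'/A)^2$ and $\tau A'/A=0$, i.e.\ $\tau\equiv0$ — the desired contradiction. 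Without an analogue of the one-variable functions $A,B$ and of the ``coefficients depend only on $u$, so they all vanish'' argument, your compatibility system has no reason to terminate, so the gap is not a matter of bookkeeping but of a missing idea. (Your worry about the locus where $\langle\mathbf{b}_1,\beta'\rangle=0$ on an open set is, by contrast, immediate: there $(\ast)$ with $K\neq0$ forces $F^2=1$, contradicting regularity.)
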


In the literature, there are many works   on the study of translation (hyper) surfaces  of plane type in different ambient spaces and different conditions on the curvatures, where the problem of finding such surfaces reduces into a problem of solving a PDE by separation of variables:  without to be a complete list, we refer: \cite{Lima, Lopez,lomu,mm,mp,Seo}.

The plan of our paper is as follows. In Sec. \ref{Prelim} we recall some known formulae on the local theory of curves and surfaces of $\R^3$ and we prove Th. \ref{t1}   in the particular case when one generating curve is a circle. In Sec. \ref{sec3} we give local conditions of the first and second fundamental forms so a metric is realizable in $\R^3$ as a metric of a translation surface. Also we give an alternative proof of Th. \ref{t1} in case that one curve is planar (see \cite{Lopez2}). Finally in Sec. \ref{sfin} we prove   Th. \ref{t1}. 

\section{Preliminaries}\label{Prelim}
For a general reference on curves and surfaces we refer to \cite{Carmo}. Moreover, the curves and surfaces considered will be assumed to be of class $C^\infty$.
Let $\alpha(s), s\in I$ and $\beta(t), t\in J$ be two curves in $\R^3$ parameterized by arc length  with curvatures $k_{\alpha}(s)>0, k_{\beta}(t)>0$, torsions $\tau_{\alpha}(s), \tau_{\beta}(t)$ and oriented Frenet trihedrons $\{{\t}_{\alpha}(s),{\n}_{\alpha}(s),{\b}_{\alpha}(s)\}$, $\{{\t}_{\beta}(t),{\n}_{\beta}(t),{\b}_{\beta}(t)\}$, for every $s\in I, t\in J$, respectively. In order to assume that $\alpha$ and $\beta$ are the generating curves of a regular translation surfaces $S\subset\R^3$, we suppose 
\begin{equation}\label{eq:2.1}
{\t}_\alpha(s)\times{\t}_\beta(t)\neq0,
\end{equation}
for all $(s,t)\in I\times J$, where $\times$ represents the vector product of  $\R^3$. Then  $\Psi(s,t)=\alpha(s)+\beta(t), (s,t)\in I\times J \subset \R^2$, is a parametrization of $S$. We have $\Psi_s={\t}_\alpha, \Psi_t={\t}_\beta$ and $\Psi_s\times \Psi_t\neq0$. Let $\phi(s,t)$, $0<\phi(s,t)<\pi$, be the angle that ${\t}_\alpha(s)$ makes with     ${\t}_\beta(t)$ at the point $\Psi(s,t)$, that is,  
$$\langle {\t}_\alpha(s),{\t}_\beta(t)\rangle =\cos\phi(s,t),$$
where $\langle,\rangle$ stands for the usual scalar product of $\R^3$.

\begin{remark}\label{remark:2.1}
 The parametric curves $t=\mbox{const.}$ are congruent and translations of $\alpha(s)$. Hence, they have the same curvature and torsion at corresponding points (similarly for the parametric curves $s=\mbox{const.}$)
\end{remark}
The first fundamental form of $S$ is $I=ds^2+2\cos\phi dsdt+dt^2$, 
and using Egregium Theorema we obtain the Gaussian curvature
\begin{equation}\label{eq:2.2}
K=-\frac{\phi_{st}}{\sin\phi},
\end{equation}
where $\phi_{st}$ is the second derivative of $\phi$ with respect to $s$ and $t$. Moreover, the unit normal ${\N}(s,t)$ of $S$ at the point $\Psi(s,t)$ is given by
\begin{equation}\label{eq:2.3}
{\N}(s,t)=\frac{{\t}_\alpha(s)\times{\t}_\beta(t)}{\sin\phi(t,s)}.
\end{equation}
Since $\Psi_{st}=0$, the second fundamental form of $S$ is $II=Ldu^2+Ndv^2$, where
$$L=\langle \Psi_{ss},{\N}\rangle=-\frac{k_\alpha}{\sin\phi}\langle {\b}_\alpha,{\t}_\beta\rangle,$$
$M=0$ and 
$$ N=\langle\Psi_{tt},{\N}\rangle =\frac{k_\beta}{\sin\phi}\langle {\t}_\alpha,{\b}_\beta\rangle.$$
These formulas are obtained by using the Frenet equations.

Having in mind the above analysis we consider the following more general  situation. Let $S\subset \R^3$ be a regular surface with parametrization $\Psi(u,v), (u,v)\in D\subset \R^2$, Gaussian curvature $K$ and fundamental forms
$$I=du^2+2\cos\phi\ dudv+dv^2,\quad II=Ldu^2+Ndv^2,$$
where $\phi(u,v)$ is the angle between parametric curves at $\Psi(u,v)$ and $0<\phi(u,v)<\pi$.
The function $\phi(u,v)$ is differentiable. For the Gaussian curvature $K$ we have
\begin{equation}
K=\frac{LN}{\sin^2\phi},
\label{eq:2.4}
\end{equation}
and by Egregium Theorema
\begin{equation}\label{eq:2.5}
\phi_{uv}=-K\sin\phi.
\end{equation}
Moreover, the Codazzi equations for $S$ becomes

\begin{equation}\label{eq:2.6}
L_v=\frac{N}{\sin\phi}\phi_u,\quad N_u=\frac{L}{\sin\phi}\phi_v.
\end{equation}
In order to compute the curvature and torsion of parametric curves $\alpha(u)=\Psi(u,const.)$ and $\beta(v)=\Psi(const.,v)$ we need the following:
\begin{enumerate}
  \item the Christoffel symbols
  \begin{eqnarray*}
      \Gamma^{1}_{11}&=\dfrac{\cos\phi}{\sin\phi}\phi_u, &\Gamma^{2}_{11}=-\frac{\phi_u}{\sin\phi}, \\
      \Gamma^{1}_{12}&=\Gamma^{1}_{21}=0, & \Gamma^{2}_{12}=\Gamma^{2}_{21}=0, \\
      \Gamma^{1}_{22}&=-\dfrac{\phi_v}{\sin\phi}, &\Gamma^{2}_{22}=\frac{\cos\phi}{\sin\phi}\phi_v. \\
    \end{eqnarray*}

  \item the Gauss formulas
\begin{eqnarray*}
\Psi_{uu}&=&\frac{\phi_u}{\sin\phi}(\cos\phi \Psi_u-\Psi_v)+L{\N}, \\
\Psi_{uv}&=&\Psi_{vu}=0,\\
\Psi_{vv}&=&\frac{\phi_v}{\sin\phi}(\Psi_u-\cos\phi \Psi_v)+N{\N},
\end{eqnarray*}
   
  \item and the Weingarten formulas
\begin{align*}
{\N}_u=-\frac{L}{\sin^2\phi}\Psi_u+\frac{L\cos\phi}{\sin^2\phi}\Psi_v,\\
{\N}_v=\frac{N\cos\phi}{\sin^2\phi}\Psi_u-\frac{N}{\sin^2\phi}\Psi_v.
\end{align*}
\end{enumerate}
The curvature $k_\alpha$ of the parametric curve $\alpha(u)$, which does not depend on $v$, is given by
$$k_\alpha=\left| \alpha'' \right|=\left| \Psi_{uu} \right| = (L^2+\phi^2_u)^{1/2},$$
where the prime ($'$) denotes derivative with respect to $u$. Moreover, the torsion $\tau_\alpha$ of $\alpha(u)$ at points where   $k_\alpha(u)>0$ is given by
$$\tau_\alpha=\frac{(\alpha',\alpha'',\alpha''')}{\left|\alpha'\times\alpha''\right|^2}=\frac{(\Psi_u,\Psi_{uu},\Psi_{uuu})}{\left|\Psi_u\times \Psi_{uu}\right|^2},$$
where ($\cdot,\cdot,\cdot$) is the mixed product in $\R^3$. Using the Gauss and Weingarten formulas, we obtain
\begin{equation}\label{eq:2.7}
\tau_\alpha=\bigg(L\phi_{uu}-\frac{L\cos\phi}{\sin\phi}(L^2+\phi^2_u)-L_u\phi_u\bigg)\big(L^2+\phi^2_u\big)^{-1}.
\end{equation}
Analogously, the curvature $k_\beta$ and torsion $\tau_\beta$ of the parametric curve $\beta(v)$ are
$$k_\beta=(N^2+\phi^2_v)^{1/2},$$
$$\tau_\beta=\bigg(N\phi_{vv}-\frac{N\cos\phi}{\sin\phi}(N^2+\phi^2_v)-N_v\phi_v\bigg)\big(N^2+\phi^2_v\big)^{-1}.$$

For later use we set
\begin{equation} \label{eq:2.8}
        A  = (L^2+\phi^2_u)^{1/2},\qquad
        B  = (N^2+\phi^2_v)^{1/2},
\end{equation}
where  $A, B$ are non-negative functions of one variable $u$ and $v$, respectively. Moreover, $A, B$ represent the curvatures of parametric curves $\alpha(u)$ and $\beta(v)$.

In the next lemma we obtain a formula, useful in the proof of   Th. \ref{t1}, for the torsion of parametric curves in the case where the Gaussian curvature of $S$ is non-zero everywhere.
\begin{lemma}\label{lemma:2.1}
Let $S\subset \R^3$ be a  surface with parametrization $\Psi(u,v)$, $(u,v)\in D\subset \R^2$, and fundamental forms
\begin{eqnarray*}
I&=&du^2+2\cos\phi\ dudv+dv^2,\ 0<\phi<\pi\\
II&=&Ldu^2+Ndv^2.
\end{eqnarray*}
If the Gaussian curvature  $K$ is non-zero everywhere on $D$, then
\begin{equation}
\phi_{uu}=\tau(u)L+\frac{\cos\phi}{\sin\phi}L^2
+\frac{A'}{A}\phi_u
\label{eq:2.9}
\end{equation}
and
\begin{equation}
\phi_{vv}=\tau(v)N+\frac{\cos\phi}{\sin\phi}N^2
+\frac{B'}{B}\phi_v,
\label{eq:2.10}
\end{equation}
where $\tau(u), \tau(v)$ are the torsions of the parametric curves  $\alpha(u), \beta(v)$, respectively.
\end{lemma}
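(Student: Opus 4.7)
The plan is to derive (2.9) as an algebraic rearrangement of formula (2.7) for the torsion $\tau_\alpha$, using the definition $A^2=L^2+\phi_u^2$ and the hypothesis that $K$ is nowhere zero. Formula (2.10) will follow from (2.9) by interchanging the roles of $u$ and $v$ (and correspondingly of $L,\phi_u,A$ with $N,\phi_v,B$), so I would only carry out the argument for $\phi_{uu}$.

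First, I would rewrite (2.7) by clearing denominators as
\begin{equation*}
L\,\phi_{uu}=\tau(u)\,A^2+\frac{L\cos\phi}{\sin\phi}\,A^2+L_u\,\phi_u.
\end{equation*}
Second, I would differentiate $A^2=L^2+\phi_u^2$ with respect to $u$ along a parametric curve $v=\mathrm{const.}$ (recall that $A$, being the curvature of $\alpha(u)=\Psi(u,\mathrm{const.})$, is a function of $u$ alone on that curve) to obtain $A\,A'=L\,L_u+\phi_u\,\phi_{uu}$, and then solve for $L_u$ as $L_u=(A\,A'-\phi_u\,\phi_{uu})/L$. Here I need $L\neq 0$, which is guaranteed by the hypothesis $K\neq 0$ via (2.4): since $LN=K\sin^2\phi\neq 0$, both $L$ and $N$ are nonvanishing on $D$. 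Note also that $A^2\geq L^2>0$, so $A$ itself is nonvanishing and the quotient $A'/A$ is well defined.

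Third, I would substitute this expression for $L_u$ into the rewritten (2.7), multiply through by $L$, and collect the two $\phi_{uu}$ terms on the left; using $L^2+\phi_u^2=A^2$ once more this reduces to
\begin{equation*}
A^2\,\phi_{uu}=\tau(u)\,L\,A^2+\frac{L^2\cos\phi}{\sin\phi}\,A^2+\phi_u\,A\,A'.
\end{equation*}
Dividing by $A^2$ gives exactly (2.9), and the analogous computation based on the torsion formula for $\beta(v)$ (using $B^2=N^2+\phi_v^2$ and $N\neq 0$) yields (2.10). The only delicate point is the justification of the division by $L$ (and similarly by $N$), but this is precisely where the assumption $K\neq 0$ is used; there is no deeper obstacle, as the rest of the argument is a direct algebraic manipulation.
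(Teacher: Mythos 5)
Your proposal is correct and follows essentially the same route as the paper: both use $K\neq 0$ together with (\ref{eq:2.4}) to get $L\neq 0$ (hence $A>0$), differentiate $A^2=L^2+\phi_u^2$ to obtain $AA'=LL_u+\phi_u\phi_{uu}$, and substitute into (\ref{eq:2.7}); you merely spell out the algebra that the paper leaves implicit.
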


\begin{proof}\label{proof}
 Because of $K\neq0$, equation (\ref{eq:2.4}) implies $L\neq0$ and $N\neq0$. Thus, the curvatures of parametric curves are non-zero and $A>0$. From $A^2=L^2+\phi^2_u$, we obtain $L^2=A^2-\phi^2_u$ and $AA'=LL_u+\phi_u\phi_{uu}$. Taking into account these relations, from (\ref{eq:2.7}) we conclude the desired relation (\ref{eq:2.9}). In a similar way we obtain (\ref{eq:2.10}).
\end{proof}For later use we prove the following

\begin{lemma}\label{lemma:2.2}
Let $S\subset \R^3$ be a regular translation surface with constant Gaussian curvature $K$. If one generating curve is a circle, then   $S$ is cylindrical and  $K=0$.
\end{lemma}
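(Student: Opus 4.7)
The plan is to assume $K$ is a nonzero constant, derive a contradiction, and then conclude that $S$ is cylindrical via item~2 of the introduction.

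Because $\alpha$ is a circle, $k_\alpha$ is constant, $\tau_\alpha \equiv 0$, and the binormal $\b_\alpha$ is a constant vector. The formula $L=-(k_\alpha/\sin\phi)\langle\b_\alpha,\t_\beta(t)\rangle$ in Section~\ref{Prelim} immediately shows that $L\sin\phi$ depends only on $t$; write $L\sin\phi = p(t)$. Assuming $K\neq 0$, (\ref{eq:2.4}) gives $N = K\sin^3\phi/p(t)$, and Lemma~\ref{lemma:2.1} (with $\tau(u)=0$ and $A=k_\alpha$ constant) specializes (\ref{eq:2.9}) to $\phi_{ss} = p^2\cos\phi/\sin^3\phi$. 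Writing out the two Codazzi equations (\ref{eq:2.6}) with these expressions for $L$ and $N$, and eliminating $\phi_t$ between them, yields the single identity $pp'(t) = K\sin^3\phi\,(1+3\cos^2\phi)\,\phi_s$.

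Since the left-hand side depends only on $t$, I differentiate this identity in $s$. Substituting the preceding value of $\phi_{ss}$, factoring out $\cos\phi$, and using $p^2 = \sin^2\phi(k_\alpha^2-\phi_s^2)$ from (\ref{eq:2.8}), one solves for $\phi_s^2$ as an explicit rational function of $\cos\phi$ alone. Differentiating this expression in $s$ once more and matching it with $\phi_{ss} = p^2\cos\phi/\sin^3\phi$ reduces, after clearing denominators, to the purely algebraic identity $15\cos^4\phi - 10\cos^2\phi + 3 = 0$; its discriminant (as a quadratic in $\cos^2\phi$) is $100 - 180 = -80 < 0$, so no real $\phi$ can satisfy it. The degenerate locus $\cos\phi\equiv 0$ is disposed of separately, since it forces $\phi\equiv\pi/2$ and hence, via Egregium (\ref{eq:2.5}), $K=0$, contradicting the standing assumption. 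Thus $K=0$, and item~2 of the introduction gives that $S$ is cylindrical.

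The main obstacle is orchestrating the four ingredients—the Egregium equation (\ref{eq:2.5}), the two Codazzi equations (\ref{eq:2.6}), and the Frenet formula (\ref{eq:2.9})—so as to simultaneously eliminate the unknown function $p(t)$ together with the partial derivatives $\phi_s$ and $\phi_t$, leaving a purely algebraic identity in $\cos\phi$. The circle hypothesis supplies exactly the rigidity required: $\b_\alpha$ constant (so $L\sin\phi$ is a pure function of $t$) together with $k_\alpha$ constant and $\tau_\alpha=0$ (so the right-hand side of (\ref{eq:2.9}) collapses to the single term $\cot\phi\cdot L^2$).
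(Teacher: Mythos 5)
Your argument is correct, and I verified the key eliminations: with $L\sin\phi=p(t)$ the two Codazzi equations (\ref{eq:2.6}) combined with $N=K\sin^3\phi/p$ do give $pp'=K\sin^3\phi\,(1+3\cos^2\phi)\,\phi_s$; differentiating in $s$ and inserting $\phi_{ss}=(k_\alpha^2-\phi_s^2)\cos\phi/\sin\phi$ yields $\phi_s^2=k_\alpha^2(1+3\cos^2\phi)/\bigl(4(1-3\cos^2\phi)\bigr)$ on the set where $\cos\phi\neq0$ (the coefficient $12\cos^2\phi-4$ cannot vanish, since that would force $2k_\alpha^2=0$); and one further differentiation does reduce to $15\cos^4\phi-10\cos^2\phi+3=0$, which has negative discriminant. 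However, your route is genuinely different from the paper's. The paper argues extrinsically: it parametrizes the circle explicitly, substitutes into the determinantal expression for $K$, expands in the linearly independent system $1,\sin(s/r),\dots,\cos(4s/r)$, and reads off (\ref{eq:2.11})--(\ref{eq:2.12}); equation (\ref{eq:2.12}) forces $K=0$ because the bracket is a sum of squares that cannot all vanish, and (\ref{eq:2.11}) then gives cylindricity directly, with no outside input. You instead run the intrinsic machinery of Lemma \ref{lemma:2.1}, Egregium and Codazzi to an impossible algebraic identity in $\cos\phi$ --- exactly the engine later used in Prop.\ \ref{proposition:3.2} and Sec.\ \ref{sfin} --- which unifies the lemma with the rest of the paper and avoids the Fourier-type expansion; but for the final cylindricity you must invoke the known classification of flat translation surfaces (item 2 of the introduction, i.e.\ \cite{Lopez2}). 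That is legitimate and non-circular, since that reference is prior and independent, but it makes your proof less self-contained than the paper's. One point of hygiene: the dichotomy at $\cos\phi=0$ should be stated as ``either $\cos\phi$ vanishes identically, whence $\phi\equiv\pi/2$ and (\ref{eq:2.5}) gives $K=0$, or there is an open set where $\cos\phi\neq0$, on which the contradiction is derived''; your phrase ``the degenerate locus $\cos\phi\equiv0$'' presupposes this but does not quite say it.
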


\begin{proof}\label{proof2}
 Without loss of generality, we suppose that $\alpha$ and $\beta$ are parametrized by the arc-length and $\alpha$ is a circle of radius $r>0$ in the $xy$-plane. Then a parametrization of $S$ is  
$$\Psi(s,t)=(r\cos(s/r),r\sin(s/r),0)+(\beta_1(t),\beta_2(t),\beta_3(t)).$$
In this situation, Equation (\ref{eq:2.4}) becomes
$$K=\frac{(\Psi_s,\Psi_t,\Psi_{ss})\cdot(\Psi_s,\Psi_t,\Psi_{tt})}{\big(1-(-\beta_1'\sin(s/r) +\beta_2'\cos(s/r))^2\big)^2},$$
where prime ($'$) denotes derivative with respect to $t$. We compute $\Psi_s,\Psi_t, \Psi_{ss}, \Psi_{tt}$ and insert in the last equation. Since the functions $1$, $\sin(s/r)$, $\cos(s/r)$,$ \sin(2s/r)$, $\cos(2s/r)$, $\sin(4s/r)$ and $\cos(4s/r)$ are linearly independent we take inter alia the following two relations
\begin{equation}
\beta_3'(\beta_2'\beta_3''-\beta_2''\beta_3')=\beta_3'(\beta_1'\beta_3''-\beta_1''\beta_3')=0
\label{eq:2.11}
\end{equation}
\begin{equation}
K\bigg(\Big(1-\frac{(\beta_1')^2+(\beta_2')^2}{2}\Big)^2+\frac{(\beta_1'\beta_2')^2}{2}+\frac{1}{2}\Big(\frac{(\beta_1')^2-(\beta_2')^2}{2}\Big)^2\bigg)=0.
\label{eq:2.12}
\end{equation}

From (\ref{eq:2.12}) it follows that $K=0$. Furthermore, (\ref{eq:2.11}) gives two possibilities: 
\begin{enumerate}
\item Case $\beta_3'=0$  for every $t$. Then $S$ is (part of) a (horizontal) plane.
\item Case $\beta_3'\not=0$ at some $t=t_0$. Then locally we have $\beta_2'\beta_3''-\beta_2''\beta_3'= \beta_1'\beta_3''-\beta_1''\beta_3'=0$. Then it is immediate that $\beta$ parametrizes a straight-line which is not contained in the $xy$-plane. This proves that $S$ is a cylindrical surface.
\end{enumerate}

\end{proof}

\section{A particular case of Theorem \ref{t1}}\label{sec3}
In the proof of   Th. \ref{t1}, we will make use of the following proposition which is interesting in itself.

\begin{proposition}\label{proposition:3.1}
Let $\Psi:D\subset\R^2\rightarrow \R^3$, $\Psi=\Psi(u,v)$,  be a parametrization of a regular   surface $S$ with  Gaussian curvature $K$. Suppose that the fundamental forms are
$$I=du^2+2\cos\phi\ dudv+dv^2,\quad II=Ldu^2+Ndv^2,$$
where   $0<\phi(u,v)<\pi$  is the angle between the parametric curves. Then, locally, we have the following assertions:
\begin{enumerate}
    \item There are two non-negative functions $A(u), B(v)$, with $A^2\geq\phi^2_u$ and $B^2\geq\phi^2_v$, such that
    \begin{equation}
        L^2+\phi^2_u=A^2, N^2+\phi^2_v=B^2,
        \label{eq:3.1}
        \end{equation}
        and
        \begin{equation}
        (A^2-\phi^2_u)(B^2-\phi^2_v)=K^2\sin^4\phi.
        \label{eq:3.2}
        \end{equation}
    \item If $K=0$ everywhere on $D$, then $S$ is cylindrical and a translation surface.
  \item If $K\neq0$ everywhere on $D$, then $S$ is translation surface with $\Psi(u,v)=\alpha(u)+\beta(v)$ and where  the curves $\alpha(u), \beta(v)$ in $\R^3$ are parametrized by the arc-length with curvatures and torsions
\begin{eqnarray}\label{eq:3.3}
  k_\alpha &=& \sqrt{L^2+\phi^2_u}=A>0 \\
\nonumber  \tau_\alpha &=& \varepsilon_1(\phi_{uu}-\frac{\cos\phi}{\sin\phi}(A^2-\phi^2_u)-\frac{A'}{A}\phi_u)
(A^2-\phi^2_u)^{1/2} \\
\nonumber  k_\beta &=& \sqrt{N^2+\phi^2_v}=B>0 \\
\nonumber  \tau_\beta &=& \varepsilon_2(\phi_{vv}-\frac{\cos\phi}{\sin\phi}(B^2-\phi^2_v)-\frac{B'}{B}\phi_v)
(B^2-\phi^2_v)^{1/2},
\end{eqnarray}
where $\varepsilon_1=sign(L)=\pm1$ and $\varepsilon_2=sign(N)=\pm1$.
\item Let $ds^2=du^2+2\cos\phi+dv^2$ a metric on $D\subset \R^2$ with $0<\phi(u,v)<\pi$ and  Gaussian curvature $K\neq0$ everywhere on $D$. If there exist two smooth non negative functions $A(u), B(v)$ which satisfy (\ref{eq:3.2}) and \mbox{$A^2>\phi^2_u, B^2>\phi^2_v$}, then the metric is realizable in $\R^3$ as a metric of a translation surface with
    $$ L=\varepsilon_1(A^2-\phi^2_u)^{1/2}, M=0, N=\varepsilon_2(B^2-\phi^2_v)^{1/2},$$
where $\varepsilon_1, \varepsilon_2=\pm1$ and $\varepsilon_1\varepsilon_2=sign(K)$.
\end{enumerate}
\end{proposition}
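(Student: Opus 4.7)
The central observation that makes all four assertions tractable is that the given Gauss formulas yield $\Psi_{uv}=0$: since $M=0$ and $\Gamma^1_{12}=\Gamma^2_{12}=0$, there is nothing left. Consequently $\Psi_u$ depends only on $u$ and $\Psi_v$ only on $v$, so integration produces $\Psi(u,v)=\alpha(u)+\beta(v)$ automatically, and the relations $|\Psi_u|=|\Psi_v|=1$ show that $\alpha,\beta$ are arc-length parametrized. Thus, for assertions (2) and (3), the translation structure of $S$ is already built into the hypothesis on $I$ and $II$, and this is the most conceptually important step.

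For assertion (1), I would show that $L^2+\phi_u^2$ depends only on $u$ by differentiating it in $v$. Using the Codazzi equation $L_v=N\phi_u/\sin\phi$ in (\ref{eq:2.6}), the Egregium identity $\phi_{uv}=-K\sin\phi$ in (\ref{eq:2.5}), and the Gauss equation $LN=K\sin^2\phi$ from (\ref{eq:2.4}),
$$(L^2+\phi_u^2)_v=2L\cdot\frac{N\phi_u}{\sin\phi}+2\phi_u\phi_{uv}=2\phi_u\Big(\frac{LN}{\sin\phi}+\phi_{uv}\Big)=0.$$
Therefore $L^2+\phi_u^2=A(u)^2$ for some non-negative $A$, and by the same argument $N^2+\phi_v^2=B(v)^2$. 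Multiplying $L^2=A^2-\phi_u^2$ by $N^2=B^2-\phi_v^2$ and substituting $(LN)^2=K^2\sin^4\phi$ yields the compatibility relation (\ref{eq:3.2}).

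For assertion (3), when $K\neq0$ both $L$ and $N$ are nonzero by (\ref{eq:2.4}), so $A,B>0$. The curvature formula $k_\alpha=|\Psi_{uu}|=A$ follows from the Gauss formula for $\Psi_{uu}$: since $|\cos\phi\,\Psi_u-\Psi_v|^2=\sin^2\phi$, the tangential piece contributes $\phi_u^2$ and the normal piece $L^2$. For the torsion I would insert $L_u=(AA'-\phi_u\phi_{uu})/L$ (obtained by differentiating $A^2=L^2+\phi_u^2$) into formula (\ref{eq:2.7}), reduce using $L^2=A^2-\phi_u^2$, and finally write $L=\varepsilon_1(A^2-\phi_u^2)^{1/2}$ with $\varepsilon_1=\mathrm{sign}(L)$ to extract the sign $\varepsilon_1$; the derivation for $\beta$ is symmetric. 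Assertion (2) then combines the already-established translation structure with the classification of flat translation surfaces cited as item (2) in the Introduction, giving that $S$ is cylindrical.

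Assertion (4) is a converse via Bonnet's fundamental theorem of surface theory. With $L,N$ defined as prescribed and $\varepsilon_1\varepsilon_2=\mathrm{sign}(K)$, the Gauss equation $LN=K\sin^2\phi$ holds by construction from (\ref{eq:3.2}). For the Codazzi equation $L_v=N\phi_u/\sin\phi$, differentiating $L^2=A^2-\phi_u^2$ in $v$ and using $\phi_{uv}=-K\sin\phi$ gives $LL_v=K\sin\phi\,\phi_u$, and dividing by $L$ and invoking the Gauss equation yields $L_v=N\phi_u/\sin\phi$; the other Codazzi equation is symmetric. Bonnet's theorem then produces an immersion realizing $I$ and $II$, and the argument of the first paragraph applied to this immersion shows it is a translation surface. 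The only real subtlety here is the sign prescription: $\varepsilon_1\varepsilon_2=\mathrm{sign}(K)$ is exactly what is needed to reconcile the positive square root $\sqrt{(A^2-\phi_u^2)(B^2-\phi_v^2)}=|K|\sin^2\phi$ with the signed Gauss equation $LN=K\sin^2\phi$.
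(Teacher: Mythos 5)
Your proposal is correct and follows essentially the same route as the paper: part (1) by differentiating $L^2+\phi_u^2$ in $v$ using the Codazzi, Gauss and Egregium relations; part (3) via $\Psi_{uv}=0$ plus the reduction of the torsion formula (\ref{eq:2.7}) carried out in Lemma \ref{lemma:2.1}; and part (4) by verifying the Gauss and Codazzi equations and invoking Bonnet's theorem, with the same observation about the sign condition $\varepsilon_1\varepsilon_2=\mathrm{sign}(K)$. The only divergence is in part (2), where you delegate to the known classification of flat translation surfaces from \cite{Lopez2} instead of the paper's short self-contained case analysis ($L=N=0$ gives a plane; $L\neq 0$, $N=0$ forces $\phi_v=0$ and hence $\Psi(u,v)=\beta(u)+v\mathbf{a}$), which is a legitimate but less self-contained shortcut.
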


\begin{proof}
    \begin{enumerate}
        \item By taking into account equation (\ref{eq:2.4}), the Codazzi equations (\ref{eq:2.6}) give
   $$               LL_v=\frac{LN}{\sin\phi}\phi_u,$$
   that is,
   $$ \frac{1}{2}\frac{\partial L^2}{\partial v}=K \phi_u \sin\phi.$$
            Now, the Egregium Theorema implies
            $$  \frac{1}{2}\frac{\partial L^2}{\partial v}=-\phi_{uv}\phi_{u}=-\frac{1}{2}\frac{\partial}{\partial v}\phi^2_u$$
            or equivalently,
             $$ \frac{\partial}{\partial v}(L^2+\phi^2_u)=0.  $$
            Hence, there is a non-negative function $A(u)$ such that $L^2+\phi^2_u=A^2(u)$. In a similar way we obtain the second of equation (\ref{eq:3.1}). Combining Eqs. (\ref{eq:2.4}) and (\ref{eq:3.1})  we conclude (\ref{eq:3.2}).

        \item Because of locality we consider two cases. If  $L=N=0$ everywhere on $D$ then the surface is a plane and so, cylindrical. If $L\neq0$ and $N=0$ everywhere on $D$, because of $K=0$, then from the second of (\ref{eq:2.6}) we take $\phi_v=0$ everywhere. The Gauss formulas imply $\Psi_{uv}=\Psi_{vv}=0$. So, by a double integration we get $\Psi(u,v)=\beta(u)+v\textbf{a}$, where $\textbf{ a}$ is a constant vector with norm $|\textbf{a}|=1$. Thus, $S$ is cylindrical and hence translation surface.

        \item Since $\Psi_{uv}=0$, the second of Gauss formulas gives $\Psi(u,v)=\alpha(u)+\beta(v)$ and hence $S$ is a translation surface. Lemma 2.1 and the preceding analysis gives (\ref{eq:3.3}).

    \item It is enough to show that the coefficients $E=1, F=\cos\phi, G=1$ of the given metric and $L=\varepsilon_1(A^2-\phi^2_u)^{1/2}, M=0, N=\varepsilon_2(B^2-\phi^2_v)^{1/2}$ satisfy the Gauss equation (\ref{eq:2.4}) and the Codazzi equations (\ref{eq:2.6}). The Gauss equation holds because of the assumption (\ref{eq:3.2}). By differentiating   $L^2=A^2-\phi^2_u$ with respect to $v$, we have
            $$LL_v=-\phi_u\phi_{uv}=-\phi_u(-K\sin\phi)=\frac{LN}{\sin^2\phi}\phi_u\sin\phi=\frac{LN\phi_u}{\sin\phi},$$
that is,
  $$ L\bigg(L_v-\frac{N\phi_u}{\sin\phi}\bigg)=0, $$
  from which, because of $K\neq0$, we obtain the first of Codazzi equations. In a similar way we obtain the second of (\ref{eq:2.6}).
 \end{enumerate}
\end{proof}

\begin{remark}\label{remark:3.1} The preceding analysis and Prop. 3.1 re-establish the following result: a regular surface $S\subset \R^3$ is a translation surface if and only if it has a parametrization $\Psi(u,v)$ with fundamental forms
$$I=du^2+2\cos\phi\ dudv+dv^2,\quad II=Ldu^2+Ndv^2.$$
\end{remark}

In our process to prove Th. \ref{t1}, we consider a translation surface $S\subset\R^3$ with   non zero constant Gaussian curvature $K$. After a similarity of the ambient space, we can suppose that $K$ is $-1$ or $1$.  In what follows, this hypothesis will be assumed without further comment. Let  $\Psi(u,v)$, $(u,v)\in D\subset \R^2$, be a parametrization of $S$   and fundamental forms
$$I=du^2+2\cos\phi\ dudv+dv^2,\quad II=Ldu^2+Ndv^2,$$
where $\phi(u,v)$  is the angle of parametric curves and $0<\phi(u,v)<\pi$. The torsion of parametric curves $v=const.$ is  denoted by $\tau=\tau(u)$. Moreover, we choose the orientation of $S$ so that $L>0$ everywhere. Then, from   (\ref{eq:3.3}) we have
\begin{equation}
\phi_{uu}=\tau X^{1/2}+\frac{\cos\phi}{\sin\phi}X+\frac{A'}{A}\phi_u,
\label{eq:3.4}
\end{equation}
where we have set $X=L^2=A^2-\phi^2_u$. Since $K$ is 1 or -1 we have $X>0$ and thus $A>0$. Moreover, because we are interested for local solutions of the problem, we may suppose that $\phi_u\neq0$ and $\phi_v\neq0$ everywhere. Indeed, otherwise we have $K=0$ from Eq. (\ref{eq:2.5}).
We postpone for a little the proof of Th. \ref{t1} inserting here a result which has been proved in \cite{Lopez2}. For completeness, we give a proof with another method which will guide us during the proof of the general case in Sec. \ref{sfin}.

\begin{proposition}\label{proposition:3.2}
Let $S\subset \R^3$ be a regular translation surface with   constant Gaussian curvature $K$. If one of the parametric curves is a plane curve, then $K=0$ and $S$ is cylindrical.
\end{proposition}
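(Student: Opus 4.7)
The plan is a proof by contradiction. Assume $K\ne 0$ is constant; by a homothety normalize to $K=\pm 1$, which by Proposition \ref{proposition:3.1}(i) forces $L,N\ne 0$ and $A,B>0$. Assume without loss of generality that the planar parametric curve is $\alpha(u)=\Psi(u,v_0)$, so its torsion vanishes and Lemma \ref{lemma:2.1} collapses to the scalar identity
\[
\phi_{uu}\;=\;\frac{\cos\phi}{\sin\phi}\,L^2\;+\;\frac{A'}{A}\,\phi_u,
\qquad L^2=A^2-\phi_u^2,
\]
which I will call $(\ast)$. This is the only place where the planarity hypothesis enters, so the goal is to feed $(\ast)$ into the Gauss--Codazzi machinery and squeeze out a contradiction.

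First I would differentiate $(\ast)$ with respect to $v$. Using the Theorema Egregium $\phi_{uv}=-K\sin\phi$ together with the consequence $LL_v=K\phi_u\sin\phi$ (obtained by differentiating $L^2=A^2-\phi_u^2$ in $v$), and clearing denominators, I expect to arrive at the first-order relation
\[
\phi_v\,L^2 \;=\; K\sin^2\phi\Bigl(3\cos\phi\,\phi_u-\tfrac{A'}{A}\sin\phi\Bigr).
\]
Substituting the algebraic Gauss relation $L^2(B^2-\phi_v^2)=\sin^4\phi$ from Proposition \ref{proposition:3.1}(i) eliminates $L$ in favor of $B$, giving an identity purely in $\phi,\phi_u,\phi_v,A(u),A'(u),B(v)$.

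Next I would differentiate this reduced identity with respect to $u$, substituting $(\ast)$ to kill every occurrence of $\phi_{uu}$ and $\phi_{uv}=-K\sin\phi$ to kill every $\phi_{uv}$, and then organize the result by monomials in $\phi_u$. Because $\phi_{uv}\ne 0$, the pair $(\phi_u,\phi_v)$ varies on an open set of values as $(u,v)$ ranges in a neighborhood, so the coefficients of distinct monomials must vanish separately, producing a system of pointwise ODEs for $A$ with $B(v)$ as parameter. I expect this system to force $A'\equiv 0$, so that $\alpha$ has constant curvature; combined with $\tau_\alpha\equiv 0$ this means $\alpha$ is a circle. Lemma \ref{lemma:2.2} then forces $K=0$, contradicting $K=\pm 1$. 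With $K=0$ established, Proposition \ref{proposition:3.1}(ii) identifies $S$ as cylindrical.

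The main obstacle is the separation-of-variables step: the factors $\sin\phi,\cos\phi$ depend on both variables through $\phi(u,v)$, so disentangling the $u$-dependence (through $A,A',A''$) from the $v$-dependence (through $B$ and $\phi_v$) requires treating $\phi,\phi_u,\phi_v$ as independent pointwise data, not as a single function. If one differentiation in $u$ is insufficient, I would also differentiate the reduced identity in $v$, using equation \eqref{eq:2.10} to replace $\phi_{vv}$ (which introduces the unknown torsion $\tau_\beta$), and then cross-compare the resulting identities to peel off mixed terms. The careful bookkeeping of this monomial expansion, and verifying that the extracted ODEs truly force $A'\equiv0$ rather than some weaker constraint, is where I would expect to spend most of the effort.
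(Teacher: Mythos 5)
Your setup and endgame coincide with the paper's: normalize to $K=\pm1$, use $\tau_\alpha=0$ to reduce \eqref{eq:3.4} to your identity $(\ast)$, differentiate in $v$ using $\phi_{uv}=-K\sin\phi$ to reach the paper's \eqref{eq:3.6}, then aim to force $A'\equiv0$, identify $\alpha$ as a circle, and conclude with Lemma \ref{lemma:2.2}. The gap sits exactly where you flag ``the main obstacle.'' After one further differentiation in $u$ you obtain a single relation of the form $P\sin2\phi+Q\cos2\phi+R=0$ (the paper's \eqref{eq:3.8}), where $P,Q,R$ are polynomials in $\phi_u$ with coefficients depending only on $u$. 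At that stage you cannot ``organize by monomials in $\phi_u$'' and set coefficients to zero: the expression still carries $\sin2\phi$ and $\cos2\phi$, and $\phi$ and $\phi_u$ are values of one and the same function, varying with $v$ at fixed $u_0$ in a correlated way. Treating $\phi,\phi_u,\phi_v$ ``as independent pointwise data'' is not a bookkeeping device --- it is precisely the assertion that needs proof, and it is false as stated; likewise the claim that $(\phi_u,\phi_v)$ fills an open set does not follow from $\phi_{uv}\neq0$ and would not suffice anyway, since the offending coefficients depend on $\phi$ itself.

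What your outline is missing is the elimination step that removes $\phi$ before any separation argument is made. The paper differentiates $P\sin2\phi+Q\cos2\phi+R=0$ once more in $u$, substitutes $\cos\phi/\sin\phi=\sin2\phi/(1-\cos2\phi)$, and combines the result with the original relation (and its square) to produce two quadratics in $\cos2\phi$, namely \eqref{eq:3.12} and \eqref{eq:3.13}, whose coefficients are polynomials in $\phi_u$ with $u$-only coefficients. Setting their eliminant \eqref{eq:3.14} to zero removes the trigonometric functions entirely and leaves a polynomial identity in $\phi_u$ of degree at most sixteen with coefficients depending only on $u$; only now does the non-constancy of $\phi_u(u_0,\cdot)$ in $v$ (from $\phi_{uv}=-K\sin\phi\neq0$) force every coefficient to vanish, and the coefficient of $\phi_u^{14}$, namely $-14(A'/A)^2$, gives $A'=0$. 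Two further points: your substitution of $L^2(B^2-\phi_v^2)=\sin^4\phi$ from \eqref{eq:3.2} is counterproductive, since it reinjects $v$-dependence (through $B$ and $\phi_v$) into coefficients that the argument needs to depend on $u$ alone --- keep $L^2=A^2-\phi_u^2$ instead; and your fallback of differentiating in $v$ via \eqref{eq:2.10} would drag in the unknown torsion $\tau_\beta$, which is exactly why the paper's additional differentiations are all taken in $u$.
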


\begin{proof}\label{proof:3.2}
    If $K$ is non-zero, up to a similarity, we can suppose that $K=1$ or $K=-1$. Without loss of generality, we assume that the parametric curves $v=const.$ are plane curves and thus $\tau(u)=0$. Then Eq. (\ref{eq:3.4}) gives
    \begin{equation}\label{eq:3.5}
      \phi_{uu}=\frac{\cos\phi}{\sin\phi}X+\frac{A'}{A}\phi_u,
    \end{equation}
    where $X=A^2-\phi^2_u>0$ and $A>0$. By differentiating   (\ref{eq:3.5}) with respect to $v$ and taking into account   (\ref{eq:2.5}) we obtain
    $$ (-K\sin\phi)_u=-\frac{1}{\sin^2\phi}\phi_vX+\frac{\cos\phi}{\sin\phi}(-2\phi_u\phi_{uv})+\frac{A'}{A}\phi_{uv},$$
  or equivalently,
  $$  3K\phi_u\cos\phi=K\frac{A'}{A}\sin\phi+\frac{1}{\sin^2\phi}\phi_vX.$$
 Multiplying by $K\sin^2\phi$ we find
 \begin{equation}\label{eq:3.6}
      K\phi_{v}=(3\phi_u\cos\phi \sin^2\phi-\frac{A'}{A}\sin^3\phi)X^{-1}.
    \end{equation}
 By differentiating   (\ref{eq:3.6}) with respect to $u$ and taking into account relations (\ref{eq:2.5}) and (\ref{eq:3.5}) we get
    \begin{eqnarray*}
      -\sin\phi&=&\bigg(-3\phi_u^2\sin^3\phi+6\phi^2_u\cos^2\phi \sin\phi+3\cos^3\phi \sin\phi X\\
      && + 3\frac{A'}{A}\phi_u\cos\phi \sin^2\phi-\big(\frac{A'}{A}\big)'\sin^3\phi-3\frac{A'}{A}\phi_u\sin^2\phi \cos\phi \bigg)X^{-1}\\
      && - \bigg((3\phi_u\cos\phi \sin^2\phi -\frac{A'}{A}\sin^3\phi)\Big(\frac{2A'}{A}(A^2-\phi^2_u)-2\frac{\cos\phi}{\sin\phi}\phi_uX\Big)\bigg)X^{-2},
    \end{eqnarray*}
 or equivalently 
 \begin{multline*}
        (1+3\cos^2\phi)X+\bigg(12\phi_u^2\cos^2\phi -3\phi_u^2\sin^2\phi -8\frac{A'}{A}\phi_u \cos\phi \sin\phi  \\
      -\big(\frac{A'}{A}\big)'\sin^2\phi+2\big(\frac{A'}{A}\big)^2\sin^2\phi\bigg)=0.
    \end{multline*}
 We write this expression in terms of the angle $2\phi$,   obtaining
  \begin{equation}\label{eq:3.7}
      -\frac{4A'}{A}\phi_u \sin2\phi+\bigg(\frac{3}{2}X+\frac{15}{2}\phi^2_u-\frac{\Sigma}{2}\bigg)\cos2\phi+\bigg(\frac{5}{2}X+\frac{9}{2}\phi^2_u+\frac{\Sigma}{2}\bigg)=0,
    \end{equation}
where $\Sigma=2(A'/A)^2-(A'/A)'.$
 Equation (\ref{eq:3.7}) is of the form
 \begin{equation}\label{eq:3.8}
      P\sin2\phi+Q\cos2\phi+R=0,
    \end{equation}
    where
    \begin{eqnarray*}
        P&=&-4\frac{A'}{A}\phi_u\\
        Q&=&\frac{3}{2}X+\frac{15}{2}\phi^2_u-\frac{\Sigma}{2}\\
        R&=&\frac{5}{2}X+\frac{9}{2}\phi^2_u+\frac{\Sigma}{2}.
    \end{eqnarray*}
 By differentiation   (\ref{eq:3.8})  with respect to $u$, we obtain
 \begin{equation}\label{eq:3.9}
      (P'-2Q\phi_u)\sin2\phi+(Q'+2P\phi_u)\cos2\phi+R'=0,
    \end{equation}
 where
    \begin{align*}
      P'-2Q\phi_u&=-4\frac{A'}{A}X\frac{\cos\phi}{\sin\phi}-4\big(\frac{A'}{A}\big)'\phi_u-4\big(\frac{A'}{A}\big)^2\phi_u-3X\phi_u-15\phi^3_u+\Sigma\phi_u,\\
      Q'+2P\phi_u&=12X \phi_u \frac{\cos\phi}{\sin\phi}+3AA'+12\frac{A'}{A}\phi^2_u-\frac{\Sigma'}{2}-8\frac{A'}{A}\phi^2_u,\\
    R'&=4 X\phi_u\frac{\cos\phi}{\sin\phi}+5AA'+4\frac{A'}{A}\phi^2_u+\frac{\Sigma'}{2},
    \end{align*}
and the prime denotes derivative with respect to $u$. Substituting the above as well as $\cos\phi/\sin\phi= \sin2\phi/(1-\cos2\phi)$ in Eq.  (\ref{eq:3.9}) we find
\begin{eqnarray*}
      &&\left(-4\big(\frac{A'}{A}\big)'\phi_u-4\big(\frac{A'}{A}\big)^2\phi_u+\phi_uX-15\phi^3_u+\Sigma\phi_u\right)\sin2\phi \\
      &+&\left(4\big(\frac{A'}{A}\big)'\phi_u+4\big(\frac{A'}{A}\big)^2\phi_u+15\phi_uX+15\phi^3_u-\Sigma\phi_u\right)\sin2\phi \cos2\phi \\
      &+&\left(-3AA'-4\frac{A'}{A}\phi^2_u+\frac{\Sigma'}{2}+4\frac{A'}{A}X\biggl)\cos^22\phi-\biggl(2AA'+\Sigma'\right)\cos2\phi  \\
      &+&\left(5AA'+4\frac{A'}{A}\phi^2_u+\frac{\Sigma'}{2}-4\frac{A'}{A}X\right)=0.
    \end{eqnarray*}
 By inserting   $X=A^2-\phi^2_u$ and $\Sigma=2( A'/A)^2-( A'/A)'$ in this expression, we get
    \begin{eqnarray}\label{eq:3.10}
      &&\left(-16\phi^3_u+\Big(A^2-5\big(\frac{A'}{A}\big)'-2\big(\frac{A'}{A}\big)^2\Big)\phi_u\right)\sin2\phi\\
      \nonumber&&+\left(15A^2 +5\big(\frac{A'}{A}\big)'+2\big(\frac{A'}{A}\big)^2\right)\phi_u\sin2\phi \cos2\phi\\
      \nonumber&&+\left(-8\frac{A'}{A}\phi_u^2+AA'+\frac{\Sigma'}{2}\right)\cos^22\phi-(2AA'+\Sigma')\cos2\phi \\
      \nonumber&&+\left(-8\frac{A'}{A}\phi_u^2+AA'+\frac{\Sigma'}{2}\right)=0.
    \end{eqnarray}
    Equation (\ref{eq:3.7}), by inserting $X=A^2-\phi^2_u$, becomes

\begin{equation}\label{eq:3.11}
      \frac{4A'}{A}\phi_u\sin2\phi=\bigg(6\phi_u^2+\frac{3}{2}A^2-\frac{\Sigma}{2}\bigg)\cos2\phi+\bigg(2\phi_u^2+\frac{5}{2}A^2+\frac{\Sigma}{2}\bigg).
    \end{equation}

    Multiplying (\ref{eq:3.10}) by $4A'/A$ and taking into account (\ref{eq:3.11}) we obtain an equation of the form
    \begin{equation}\label{eq:3.12}
        b_2\cos^22\phi+b_1\cos2\phi+b_0=0,
    \end{equation}    
    with
    \begin{eqnarray*}
b_2&=&b_{22}\phi^2_u+b_{20}=\left(90A^2+30\big(\frac{A'}{A}\big)'-20\big(\frac{A'}{A}\big)^2\right)\phi^2_u \\
&&+\left(15A^2+5\big(\frac{A'}{A}\big)'+2\big(\frac{A'}{A}\big)^2\right)\left(\frac{3}{2}A^2-\frac{\Sigma}{2}\right)+4(A')^2+2\frac{A'\Sigma'}{A},\\
b_1&=& b_{14}\phi^4_u+b_{12}\phi^2_u+b_{10}=-96\phi^4_u\\
&&+\left(12A^2-20\big(\frac{A'}{A}\big)'-8\big(\frac{A'}{A}\big)^2+8\Sigma\right)\phi^2_u\\
&&+\left(A^2-5\big(\frac{A'}{A}\big)'-2\big(\frac{A'}{A}\big)^2\right)\bigg(\frac{3}{2}A^2-\frac{\Sigma}{2}\bigg)\\
&&+\bigg(15A^2+5\big(\frac{A'}{A}\big)'+2\big(\frac{A'}{A}\big)^2\bigg)\bigg(\frac{5}{2}A^2+\frac{\Sigma}{2}\bigg)-8(A')^2-4\frac{A'\Sigma'}{A},\\
b_0&=&b_{04}\phi^4_u+b_{02}\phi^2_u+b_{00}\\
&=&-32\phi^4+\left(-38A^2-10\big(\frac{A'}{A}\big)'+28\big(\frac{A'}{A}\big)^2-8\Sigma\right)\phi^2_u\\
&&+\left(A^2-5\big(\frac{A'}{A}\big)'-8\big(\frac{A'}{A}\big)^2\right)\left(\frac{5}{2}A^2+\frac{\Sigma}{2}\right)+4(A')^2+2\frac{A'\Sigma'}{A}.
\end{eqnarray*}
  From Eq.  (\ref{eq:3.11}) a raising squared we get one equation of the form
    \begin{equation}\label{eq:3.13}
        c_2\cos^22\phi+c_1\cos2\phi+c_0=0,
    \end{equation}
    with
    \begin{eqnarray*}
&c_2=c_{24}\phi_u^4+c_{22}\phi_u^2+c_{20}= 36\phi_u^4 +\left(18A^2-6\Sigma+16\big(\frac{A'}{A}\big)^2\bigg)\phi_u^2+\bigg(\frac{3}{2}A^2-\frac{\Sigma}{2}\right)^2\\
&c_1=c_{14}\phi_u^4+c_{12}\phi_u^2+c_{10}= 24\phi_u^4 +\left(36A^2+4\Sigma\bigg)\phi_u^2+\bigg(3A^2-\Sigma\bigg)\bigg(\frac{5}{2}A^2+\frac{\Sigma}{2}\right)\\
&c_0=c_{04}\phi_u^4+c_{02}\phi_u^2+c_{00}=4\phi_u^4 +\left(10A^2+2\Sigma-16\big(\frac{A'}{A}\big)^2\bigg)\phi_u^2+\bigg(\frac{5}{2}A^2+\frac{\Sigma}{2}\right)^2.
    \end{eqnarray*}
    The two quadratic equations (\ref{eq:3.12}) and (\ref{eq:3.13}) have at least a common solution with respect to $\cos2\phi$. Thus, their eliminant must be zero, hence we have
\begin{equation}\label{eq:3.14}
        (b_2c_0-b_0c_2)^2-(b_0c_1-b_1c_0)(b_1c_2-b_2c_1)=0.
    \end{equation}
  In the meantime, we have compute each one of the above factor only in the higher coefficients with respect to $\phi_u$:
 \begin{eqnarray*}
      b_2c_0-b_0c_2 &=& (b_{22}\phi^2_u+b_{20})(c_{04}\phi^4_u+c_{02}\phi^2_u+c_{00}) \\
       &&-(b_{04}\phi^4_u+b_{02}\phi^2_u+b_{00})(c_{24}\phi^4_u+c_{22}\phi^2_u+c_{20}) \\
       &=& -b_{04}c_{24}\phi^8_u+(b_{22}c_{04}-b_{04}c_{22}-b_{02}c_{24})\phi^6_u +\ldots\\
       &=& 1152\phi^8_u+384\bigg(6A^2+(\frac{A'}{A})'-(\frac{A'}{A})^2\bigg)\phi^6_u+\ldots\\
       &=& 384\bigg(3\phi^8_u+\Big(6A^2+(\frac{A'}{A})'-(\frac{A'}{A})^2\Big)\phi^6_u\bigg)+\ldots\\
      b_0c_1-b_1c_0 &=& (b_{04}\phi^4_u+b_{02}\phi^2_u+b_{00})(c_{14}\phi^4_u+c_{12}\phi^2_u+c_{10})\\
      &&-(b_{14}\phi^4_u+b_{12}\phi^2_u+b_{10})(c_{04}\phi^4_u+c_{02}\phi^2_u+c_{00}) \\
       &=& (b_{04}c_{14}-b_{14}c_{04})\phi^8_u+(b_{04}c_{12}+b_{02}c_{14}-b_{14}c_{02}-b_{12}c_{04})\phi^6_u +\ldots\\
       &=& -384\bigg(\phi^8_u+3\Big(A^2+(\frac{A'}{A})^2\Big)\phi^6_u\bigg)+\ldots\\
      b_1c_2-b_2c_1 &=& (b_{14}\phi^4_u+b_{12}\phi^2_u+b_{10})(c_{24}\phi^4_u+c_{22}\phi^2_u+c_{20})\\
      &&-(b_{22}\phi^2_u+b_{20})(c_{14}\phi^4_u+c_{12}\phi^2_u+c_{10}) \\
       &=& b_{14}c_{24}\phi^8_u+(b_{14}c_{22}+b_{12}c_{24}-b_{22}c_{14})\phi^6_u +\ldots\\
       &=& -384\bigg(9\phi^8_u+\Big(9A^2-(\frac{A'}{A})^2+6(\frac{A'}{A})'\Big)\phi^6_u\bigg)+\ldots.
    \end{eqnarray*}
Equation (\ref{eq:3.14}) is a polynomial equation with respect to $\phi_u$ at most of degree sixteen, and its coefficients depend only on the variable $u$. We claim that all coefficients must be zero: otherwise the equation has a solution of the form $\phi_u=f(u)$, but this is impossible because in that case and from (\ref{eq:2.5}), we have $K=0$, which is a contradiction. The coefficient of $\phi^{16}_u$ is trivially zero, but the coefficient of $\phi^{14}_u$ is $-14( A'/A)^2$. Hence we have $A'=0$, that is, $A$ is a positive constant.  So the parametric curves $v=const.$ are circles. By  Lemma \ref{lemma:2.2} we deduce that $K=0$, a contradiction and the proof of proposition is completed.
\end{proof}

\section{Proof of Theorem \ref{t1}}\label{sfin}

We return now to the proof of Th. \ref{t1}. In Prop. \ref{proposition:3.2} we have proved that if one generating curve is a planar curve, then $S$ is a cylindrical surface. Then the proof of Th. \ref{t1} is by contradiction assuming that none of the generating curves is planar, so the torsions $\tau=\tau(u)$ and $\tau=\tau(v)$ are non zero everywhere.
Differentiating (\ref{eq:3.4}) with respect to $v$ and taking into account (\ref{eq:2.5}) we get
\begin{equation}\label{eq:3.15}
  K\phi_v
=(3\phi_u\cos\phi \sin^2\phi -\frac{A'}{A}\sin^3\phi)X^{-1}+\tau \phi_u\sin^3\phi X^{-3/2}.
\end{equation}
Moreover,    differentiating (\ref{eq:3.15}) with respect to $u$ and using (\ref{eq:3.4}), we obtain
\begin{eqnarray*}
  &&(1+3\cos^2\phi)X^2 +4\tau \cos\phi \sin\phi X^{3/2}+\biggl(12\phi^2_u\cos^2\phi-3\phi^2_u\sin^2\phi    \\
  &&-8\frac{A'}{A}\phi_u\cos\phi \sin\phi+\biggl(2(\frac{A'}{A})^2-(\frac{A'}{A})'+\tau^2\biggl)\sin^2\phi\biggl)X  \\
  &&+\biggl(12\tau\phi_u^2 \cos\phi \sin\phi -4\tau(\frac{A'}{A})\phi_u\sin^2\phi +\tau'\phi_u\sin^2\phi\biggl)X^{1/2}  \\
  &&+3\tau^2\phi^2_u\sin^2\phi=0,
\end{eqnarray*}
where the prime ($'$) denotes derivative with respect to $u$. We write again the above parenthesis in terms of the angle $2\phi$ and we insert the value of $\Sigma$, obtaining
\begin{eqnarray}\label{eq:3.16}
 &&\left(6\tau\phi^2_uX^{1/2}-4\frac{A'}{A}\phi_uX+2\tau X^{3/2}\right)\sin2\phi\\
\nonumber && +\left(\frac{15}{2}\phi^2_uX-\frac{3}{2}\tau^2\phi^2_u+\left(2\tau\frac{A'}{A}-\frac{\tau'}{2}\right)\phi_uX^{1/2}+\frac{3}{2}X^2-\frac{\Sigma X}{2}\right)\cos2\phi\\
\nonumber &&+\left(\frac{3}{2}\tau^2\phi^2_u+\frac{9}{2}\phi^2_uX-\Big(2\tau\frac{A'}{A}-\frac{\tau'}{2}\Big)\phi_uX^{1/2}+\frac{5}{2}X^2+\frac{\Sigma X}{2}\right)=0.
\end{eqnarray}
Equation (\ref{eq:3.16}) is of the form
\begin{equation}\label{eq:3.17}
  P\sin2\phi+Q\cos2\phi+R=0,
\end{equation}
where
\begin{equation}\label{eq:3.18}
  P=\biggl(2\tau A^2+4\tau\phi^2_u\biggl)X^{1/2}+\biggl(-4AA'\phi_u+\frac{4A'}{A}\phi^3_u\biggl),
\end{equation}
\begin{equation}\label{eq:3.19}
  Q=\bigg(2\tau\frac{A'}{A}-\frac{\tau'}{2}\bigg)\phi_uX^{1/2}+\biggl(\frac{3}{2}A^4-\frac{\Sigma A^2}{2}+\Big(\frac{9A^2}{2}-\frac{3\tau^2}{2}+\frac{\Sigma}{2}\Big)\phi^2_u-6\phi^4_u \biggl),
\end{equation}
\begin{equation}\label{eq:3.20}
  R=-\bigg(2\tau\frac{A'}{A}-\frac{\tau'}{2}\bigg)\phi_uX^{1/2}+\biggl(\frac{5}{2}A^4+\frac{\Sigma A^2}{2}+\Big(\frac{3\tau^2}{2}-\frac{A^2}{2}-\frac{\Sigma}{2}\Big)\phi^2_u-2\phi^4_u \biggl),
\end{equation}
because of $X=A^2-\phi^2_u$ and $X^{3/2}=X^{1/2}X$.

We differentiate (\ref{eq:3.17}) with respect to $u$, obtaining 
\begin{equation}\label{eq:3.24}
  (P'-2Q\phi_u)\sin2\phi+(Q'+2P\phi_u)\cos2\phi+R'=0.
\end{equation}
The idea now is to  write this expression  as a linear combination of the trigonometric functions of type $\cos(n\phi)$ and $\sin(n\phi)$, which are independent linearly. We compute the three coefficients in (\ref{eq:3.24}). We calculate the derivatives $P'=\frac{\partial P}{\partial u}, Q'=\frac{\partial Q}{\partial u}$ and $R'=\frac{\partial R}{\partial u}$ by using (\ref{eq:2.5}) and (\ref{eq:3.4}). Then we obtain
\begin{eqnarray}\label{eq:3.21}
 P' &=& \bigg(\Big(6\tau A^2\phi_u-12\tau\phi^3_u\Big)X^{1/2}\\
\nonumber && +\Big(-4A^3A'+16AA'\phi^2_u-12\frac{A'}{A}\phi^4_u\Big)\bigg)\frac{\cos\phi}{\sin\phi}\\
\nonumber && +\left( 2\tau AA'+2A^2\tau'+\Big(4\tau'+24\tau\frac{A'}{A}\Big)\phi^2_u\right)X^{1/2} \\
\nonumber && +\left(6\tau^2A^2-12(A')^2-4A^2(\frac{A'}{A})'\right)\phi_u\\
\nonumber && +\left(-12\tau^2+12(\frac{A'}{A})^2+4(\frac{A'}{A})'\right)\phi^3_u,
\end{eqnarray}
\begin{eqnarray}\label{eq:3.22}
 Q' &=& \bigg(\Big( 2\tau AA'-\frac{\tau'A^2}{2}-\big(4\tau\frac{A'}{A}-\tau'\big)\phi^2_u\Big)X^{1/2}\\
\nonumber && +\big(9A^4-3\tau^2A^2+\Sigma A^2\big)\phi_u-\big(33A^2-3\tau^2+\Sigma\big)\phi^3_u\\
\nonumber && +24\phi^5_u \bigg)\frac{\cos\phi}{\sin\phi}+\Big(\big(\frac{A'\tau'}{A}+2\tau\big(\frac{A'}{A}\big)'-\frac{\tau''}{2}+4\tau\big(\frac{A'}{A}\big)^2\\
\nonumber && +9A^2\tau-3\tau^3+\Sigma\tau\big)\phi_u-24\tau\phi^3_u\Big)X^{1/2} \\
\nonumber && + \big(6A^3A'-\frac{\Sigma' A^2}{2}-\Sigma AA'+2AA'\tau^2-\frac{A^2\tau\tau'}{2}\big)\\
\nonumber && +\big(18AA'-2\tau\tau'-\frac{7A'\tau^2}{A}+\frac{A'\Sigma}{A}+\frac{\Sigma'}{2}\big)\phi^2_u-24\frac{A'}{A}\phi^4_u,
\end{eqnarray}
\begin{eqnarray}\label{eq:3.23}
 R' &=& \bigg(\Big(-2\tau AA'+\frac{\tau'A^2}{2}+\big(4\tau\frac{A'}{A}-\tau'\big)\phi^2_u\Big)X^{1/2}\\
\nonumber && +\big(3\tau^2A^2-A^4-\Sigma A^2\big)\phi_u  \\
\nonumber && -\big(7A^2+3\tau^2-\Sigma\big)\phi^3_u+8\phi^5_u\bigg)\frac{\cos\phi}{\sin\phi} \\
\nonumber && +\Big(\big(2\tau^3-6\tau(\frac{A'}{A})^2-\tau(\frac{A'}{A})'-\frac{A'\tau'}{A}-A^2\tau+\frac{\tau''}{2}\big)\phi_u\\
\nonumber && -8\tau\phi_u^3\Big)X^{1/2}+\Big(10A^3A'+\frac{A^2}{2}(\tau\tau'+\Sigma')+(\Sigma-2\tau^2)AA'\Big)\\
\nonumber && +\Big(2\tau\tau'-2AA'-\frac{\Sigma'}{2}+\frac{7A'\tau^2}{A}-\frac{A'\Sigma}{A}\Big)\phi_u^2-\frac{8A'}{A}\phi^4_u.
\end{eqnarray}
Now we compute the  coefficients in (\ref{eq:3.24}). From Eqs. (\ref{eq:3.18})--(\ref{eq:3.23}), we get the coefficient $P'-2Q\phi_u$ of $\sin2\phi$, obtaining  
\begin{equation}\label{eq:3.25}
  P'-2Q\phi_u=\Big(\alpha_1X^{1/2}+\alpha_2\Big)\frac{\cos\phi}{\sin\phi}+(\alpha_3X^{1/2}+\alpha_4),
\end{equation}
where
\begin{eqnarray*}
  \alpha_1 &=& 6\tau A^2\phi_u-12\tau\phi^3_u\\
  \alpha_2 &=& -4A^3A'+16AA'\phi^2_u-12\frac{A'}{A}\phi^4_u \\
  \alpha_3 &=& 2\tau AA'+2A^2\tau'+\Big(20\tau\frac{A'}{A}+5\tau'\Big)\phi^2_u \\
  \alpha_4 &=& \bigg(6\tau^2A^2-12(A')^2-4A^2(\frac{A'}{A})'-3A^4+\Sigma A^2\bigg)\phi_u\\
  &&   +\bigg(12(\frac{A'}{A})^2+4(\frac{A'}{A})'-9A^2-9\tau^2-\Sigma\bigg)\phi^3_u+12\phi^5_u,
\end{eqnarray*}
On the other hand, the coefficient $Q'+2P\phi_u$ of $\cos2\phi$ is
\begin{equation}\label{eq:3.26}
  Q'+2P\phi_u=\Big(\beta_1X^{1/2}+\beta_2\Big)\frac{\cos\phi}{\sin\phi}+(\beta_3X^{1/2}+\beta_4),
\end{equation}
where
\begin{eqnarray*}
  \beta_1 &=& \Big(2\tau AA'-\frac{\tau'}{2}A^2\Big)-\Big(4\tau\frac{A'}{A}-\tau'\Big)\phi^2_u \\
  \beta_2 &=& (9A^4-3A^2\tau^2+\Sigma A^2)\phi_u-(33A^2-3\tau^2+\Sigma)\phi^3_u+24\phi^5_u\\
  \beta_3 &=& \Big(\frac{A'\tau'}{A}+2\tau(\frac{A'}{A})'-\frac{\tau''}{2}+4\tau(\frac{A'}{A})^2+13A^2\tau-3\tau^3+\Sigma\tau\Big)\phi_u-16\tau\phi^3_u \\
  \beta_4 &=& \Big(6A^3A'-\frac{\Sigma'A^2}{2}-\Sigma AA'+2AA'\tau^2-\frac{A^2\tau\tau'}{2}\Big)\\
  &&   +\Big(10AA'-2\tau\tau'-\frac{7A'}{A}\tau^2+\frac{A'\Sigma}{A}+\frac{\Sigma'}{2}\Big)\phi^2_u -\frac{16A'}{A}\phi^4_u.
\end{eqnarray*}
Finally, the term $R'$ is
\begin{equation}\label{eq:3.27}
  R'=\Big(\gamma_1X^{1/2}+\gamma_2\Big)\frac{\cos\phi}{\sin\phi}+(\gamma_3X^{1/2}+\gamma_4),
\end{equation}
where
\begin{eqnarray*}
  \gamma_1 &=& -2\tau AA'+\frac{\tau'}{2}A^2 +\Big(4\tau\frac{A'}{A}-\tau'\Big)\phi^2_u \\
  \gamma_2 &=& \left(3A^2\tau^2-A^4-\Sigma A^2\right)\phi_u-\left(7A^2+3\tau^2-\Sigma\right)\phi^3_u+8\phi^5_u\\
  \gamma_3 &=& \left(2\tau^3-6\tau(\frac{A'}{A})^2-\tau(\frac{A'}{A})'-\frac{\tau'A'}{A}-A^2\tau+\frac{\tau''}{2}\right)\phi_u-8\tau\phi^3_u \\
  \gamma_4 &=&  \left(10A^3A'+\frac{A^2}{2}(\tau\tau'+\Sigma')+(\Sigma-2\tau^2)AA'\right)\\
  &&   +\left(2\tau\tau'-2AA'-\frac{\Sigma'}{2}+\frac{7A'}{A}\tau^2-\frac{A'}{A}\Sigma\right)\phi^2_u-\frac{8A'}{A}\phi^4_u.
\end{eqnarray*}
Putting all these in (\ref{eq:3.24}) we obtain
\begin{eqnarray*}
&& \left((\alpha_1X^{1/2}+\alpha_2)\frac{\cos\phi}{\sin\phi}+(\alpha_3X^{1/2}+\alpha_4)\right)\sin2\phi\\
&& +\left((\beta_1X^{1/2}+\beta_2)\frac{\cos\phi}{\sin\phi}+(\beta_3X^{1/2}+\beta_4)\right)\cos2\phi\\
&& +\left((\gamma_1X^{1/2}+\gamma_2)\frac{\cos\phi}{\sin\phi}+(\gamma_3X^{1/2}+\gamma_4)\right)=0.
\end{eqnarray*}
Using $ \cos\phi/\sin\phi= \sin2\phi/(1-\cos2\phi)$, the above expression can be written as
\begin{eqnarray*}
&& -\left((\alpha_1X^{1/2}+\alpha_2)+(\beta_3X^{1/2}+\beta_4)\right)\cos^22\phi\\
&& +\left((\beta_1X^{1/2}+\beta_2)-(\alpha_3X^{1/2}+\alpha_4)\right)\sin2\phi \cos2\phi\\
&& +\left((\beta_3X^{1/2}+\beta_4)-(\gamma_3X^{1/2}+\gamma_4)\right)\cos2\phi\\
&& +\left((\alpha_3X^{1/2}+\alpha_4)+(\gamma_1X^{1/2}+\gamma_2)\right)\sin2\phi\\
&& +\left((\alpha_1X^{1/2}+\alpha_2)+(\gamma_3X^{1/2}+\gamma_4)\right)=0.
\end{eqnarray*}
Multiplying the last equation by $P$ and inserting $P\sin2\phi=-Q\cos2\phi-R$  from Eq.  (\ref{eq:3.17}), we have
\begin{eqnarray}\label{eq:3.28}
 && \biggl(Q(\alpha_3X^{1/2}+\alpha_4)-Q(\beta_1X^{1/2}+\beta_2)-P(\alpha_1X^{1/2}+\alpha_2) \\
\nonumber && -P(\beta_3X^{1/2}+\beta_4)\biggl)\cos^22\phi+\biggl(P(\beta_3X^{1/2}+\beta_4)-P(\gamma_3X^{1/2}+\gamma_4) \\
\nonumber && -Q(\alpha_3X^{1/2}+\alpha_4)-Q(\gamma_1X^{1/2}+\gamma_2)+R(\alpha_3X^{1/2}+\alpha_4)\\
\nonumber && -R(\beta_1X^{1/2}+\beta_2)\biggl)\cos2\phi+\biggl(P(\alpha_1X^{1/2}+\alpha_2)+P(\gamma_3X^{1/2}+\gamma_4)\\
\nonumber && -R(\alpha_3X^{1/2}+\alpha_4)-R(\gamma_1X^{1/2}+\gamma_2)\biggl)=0.
\end{eqnarray}

Equation (\ref{eq:3.28}) is of the form
\begin{equation}\label{eq:3.33}
  b_2\cos^22\phi+b_1\cos2\phi+b_0=0,
\end{equation}
where
\begin{equation}\label{eq:3.34}
  b_2=b_{21}X^{1/2}+b_{22},\quad    b_1=b_{11}X^{1/2}+b_{12}\quad 
   b_0=b_{01}X^{1/2}+b_{02},
\end{equation}
and
\begin{eqnarray}\label{eq:3.35}
   b_{21} &=& Q_2(\alpha_3-\beta_1)+Q_1(\alpha_4-\beta_2)-P_1(\alpha_2+\beta_4)-P_2(\alpha_1+\beta_3) \\
\nonumber   b_{22} &=& Q_1(\alpha_3-\beta_1)X-P_1(\alpha_1+\beta_3)X+Q_2(\alpha_4-\beta_2)-P_2(\alpha_2+\beta_4) \\
\nonumber   b_{11} &=& P_1(\beta_4-\gamma_4)+P_2(\beta_3-\gamma_3)-Q_1(\alpha_4+\gamma_2) \\
\nonumber && -Q_2(\alpha_3+\gamma_1)+R_1(\alpha_4-\beta_2)+R_2(\alpha_3-\beta_1) \\
\nonumber   b_{12} &=& P_1(\beta_3-\gamma_3)X+P_2(\beta_4-\gamma_4)-Q_1(\alpha_3+\gamma_1)X \\
\nonumber && -Q_2(\alpha_4+\gamma_2)+R_1(\alpha_3-\beta_1)X+R_2(\alpha_4-\beta_2) \\
\nonumber   b_{01} &=& P_1(\alpha_2+\gamma_4)+P_2(\alpha_1+\gamma_3)-R_1(\alpha_4+\gamma_2)-R_2(\alpha_3+\gamma_1) \\
\nonumber   b_{02} &=& P_1(\alpha_1+\gamma_3)X+P_2(\alpha_2+\gamma_4)-R_1(\alpha_3+\gamma_1)X-R_2(\alpha_4+\gamma_2).
\end{eqnarray}

On the other hand Eq. (\ref{eq:3.17}) gives
\begin{equation}\label{eq:3.29}
  (P^2+Q^2)\cos^22\phi+2QR\cos2\phi+R^2-P^2=0.
\end{equation}
Equations (\ref{eq:3.18}), (\ref{eq:3.19}) and (\ref{eq:3.20}) are written  as 
$$P = P_1X^{1/2}+P_2,\quad Q= Q_1X^{1/2}+Q_2,\quad R = R_1X^{1/2}+R_2,$$
 respectively, where 
\begin{equation}\label{eq:3.30}
  P_1 = 2\tau A^2+4\tau\phi^2_u,\quad   P_2 = -4AA'\phi_u+\frac{4A'}{A}\phi^3_u,
\end{equation}
\begin{eqnarray}\label{eq:3.31}
  Q_1 &=& \bigg(2\tau\frac{A'}{A}-\frac{\tau'}{2}\bigg)\phi_u, \\
\nonumber  Q_2 &=& \frac{3}{2}A^4-\frac{\Sigma A^2}{2}+\bigg(\frac{9A^2}{2}-\frac{3\tau^2}{2}+\frac{\Sigma}{2}\bigg)\phi^2_u-6\phi^4_u,
\end{eqnarray}
\begin{eqnarray}\label{eq:3.32}
 R_1 &=& -\bigg(2\tau\frac{A'}{A}-\frac{\tau'}{2}\bigg)\phi_u, \\
\nonumber  R_2 &=& \frac{5}{2}A^4+\frac{\Sigma A^2}{2}+\bigg(-\frac{A^2}{2}+\frac{3\tau^2}{2}-\frac{\Sigma}{2}\bigg)\phi^2_u-2\phi^4_u.
\end{eqnarray}
Thus Eq.  (\ref{eq:3.29}) is written as
\begin{equation}\label{eq:3.36}
  c_2\cos^22\phi+c_2\cos2\phi+c_0=0,
\end{equation}
where
\begin{equation}\label{eq:3.37}
   c_2= c_{21}X^{1/2}+c_{22},\quad 
  c_1= c_{11}X^{1/2}+c_{12},\quad
   c_0= c_{01}X^{1/2}+c_{02},
\end{equation}
and
\begin{eqnarray}\label{eq:3.38}
  c_{21} &=& 2P_1P_2+2Q_1Q_2 \\
\nonumber  c_{22} &=& P_1^2X+Q_1^2X+P_2^2+Q_2^2 \\
\nonumber  c_{11} &=& 2(Q_1R_2+Q_2R_1) \\
\nonumber  c_{12} &=& 2(Q_1R_1X+Q_2R_2) \\
\nonumber  c_{01} &=& 2(R_1R_2-P_1P_2) \\
\nonumber  c_{02} &=& R_1^2X-P_1^2X+R_2^2-P_2^2.
\end{eqnarray}
The quadratic equations (\ref{eq:3.33}) and (\ref{eq:3.36}) have at least a common solution with respect to $\cos2\phi$. Thus, their eliminant must be zero, hence, we have
\begin{equation}\label{eq:3.39}
  (b_2c_0-b_0c_2)^2-(b_0c_1-b_1c_0)(b_1c_2-b_2c_1)=0.
\end{equation}
The proof of Th. \ref{t1} finishes by observing that this identity can be written as a polynomial equation on $\phi_u$, and as in the proof of Prop. \ref{proposition:3.2}, all coefficients of this polynomial must vanish. 

Taking  into account (\ref{eq:3.34}) and (\ref{eq:3.37}), we compute the  three factors of (\ref{eq:3.39}), obtaining 
\begin{eqnarray}\label{eq:3.40}
  (b_2c_0-b_0c_2)&=&\Big(b_{22}c_{01}+b_{21}c_{02}-b_{01}c_{22}-b_{02}c_{21}\Big)X^{1/2}\\
\nonumber  &&+\Big(b_{21}c_{01}X+b_{22}c_{02}-b_{01}c_{21}X-b_{02}c_{22}\Big)\\
\nonumber  &=&\kappa_1X^{1/2}+\kappa_2,
\end{eqnarray}
\begin{eqnarray}\label{eq:3.41}
  (b_0c_1-b_1c_0)&=&\Big(b_{01}c_{12}+b_{02}c_{11}-b_{11}c_{02}-b_{12}c_{01}\Big)X^{1/2}\\
\nonumber  && + \Big(b_{01}c_{11}X+b_{02}c_{12}-b_{11}c_{01}X-b_{12}c_{02}\Big)\\
\nonumber  &=&\lambda_1X^{1/2}+\lambda_2,
\end{eqnarray}
\begin{eqnarray}\label{eq:3.42}
(b_1c_2-b_2c_1)&=&\Big(b_{11}c_{22}+b_{12}c_{21}-b_{21}c_{12}-b_{22}c_{11}\Big)X^{1/2}\\
\nonumber   &&+\Big(b_{11}c_{21}X+b_{12}c_{22}-b_{21}c_{11}X-b_{22}c_{12}\Big)\\
\nonumber  &=&\mu_1X^{1/2}+\mu_2.
\end{eqnarray}
 Putting all these in (\ref{eq:3.39}) we obtain 
  $$   (\kappa_1^{2}X+\kappa_2^2-\lambda_1\mu_1X-\lambda_2\mu_2)=(\lambda_1\mu_2+\lambda_2\mu_1-2\kappa_1\kappa_2)X^{1/2}, $$ 
or raising squared
 \begin{equation}\label{eq:3.43}
  (\kappa_1^{2}X+\kappa_2^2-\lambda_1\mu_1X-\lambda_2\mu_2)^2-(\lambda_1\mu_2+\lambda_2\mu_1-2\kappa_1\kappa_2)^2X=0.
\end{equation}
The last equation is a polynomial equation with respect to $\phi_u$, with coefficients which depend only on $u$. Thus, all coefficients must be zero as, in the proof of Prop. \ref{proposition:3.2}, we claimed.
In order to calculate the coefficients of this equation, at least those ones of higher exponent, we proceed as follows. We set for simplification $z=\phi_u$ and we express the terms $\alpha_i$, $\beta_i$ and $\gamma_i$ as polynomials on $z$. By using (\ref{eq:3.25}), (\ref{eq:3.26}) and (\ref{eq:3.27}), we have
 \begin{eqnarray}\label{eq:3.44}
 \alpha_1&=& \alpha_{11}z+\alpha_{13}z^3\\
\nonumber \alpha_2&=& \alpha_{20}+\alpha_{22}z^2+\alpha_{24}z^4\\
\nonumber \alpha_3&=& \alpha_{30}+\alpha_{32}z^2\\
\nonumber \alpha_4&=& \alpha_{41}z+\alpha_{43}z^3+\alpha_{45}z^5
\end{eqnarray}
 with
 \begin{eqnarray*}
 &&\alpha_{11}=6\tau A^2,  \alpha_{13}=-12\tau,  \alpha_{20}=-4A^3A', \alpha_{22}=16AA', \\
 && \alpha_{24}=-12\frac{A'}{A}, \alpha_{30}=2\tau AA'+2A^2\tau, \alpha_{32}=20\tau\frac{A'}{A}+5\tau',\\
 &&\alpha_{41}=6\tau^2A^2-12(A')^2-4A^2(\frac{A'}{A})'-3A^4+\Sigma A^2,\\
 &&\alpha_{43}=12(\frac{A'}{A})^2+4(\frac{A'}{A})'-9A^2-9\tau^2-\Sigma, \alpha_{45}=12.
 \end{eqnarray*}
 For $\beta_i$, we have
 \begin{eqnarray}\label{eq:3.45}
 \beta_1&=& \beta_{10}+\beta_{12}z^2\\
\nonumber \beta_2&=& \beta_{21}z+\beta_{23}z^3+\beta_{25}z^5\\
\nonumber \beta_3&=& \beta_{31}z+\beta_{33}z^3\\
\nonumber \beta_4&=& \beta_{40}+\beta_{42}z^2+\beta_{44}z^4
\end{eqnarray}
 with
 \begin{eqnarray*}
 &&\beta_{10}=2\tau AA'-\frac{\tau'}{2}A^2, \beta_{12}=4\tau\frac{A'}{A}-\tau',\\
 && \beta_{21}=9A^4-3A^2\tau^2+\Sigma A^2, \beta_{23}=-(33A^2-3\tau^2+\Sigma), \beta_{25}=24, \\
 &&\beta_{31}=\frac{A'\tau'}{A}+2\tau(\frac{A'}{A})'-\frac{\tau''}{2}+4\tau(\frac{A'}{A})^2+13A^2\tau-3\tau^3+\Sigma\tau, \\
 &&\beta_{33}=16\tau, \beta_{40}=6A^3A'-\frac{\Sigma'A^2}{2}-\Sigma AA'+2AA'\tau^2-\frac{A^2\tau\tau'}{2}, \\
 &&\beta_{42}=10AA'-2\tau\tau'-\frac{7A'}{A}\tau^2+\frac{A'\Sigma}{A}+\frac{\Sigma'}{2},  \beta_{44}= -\frac{16A'}{A}.
 \end{eqnarray*}
 And for $\gamma_i$, we  have
 \begin{eqnarray}\label{eq:3.46}
 \gamma_1&=& \gamma_{10}+\gamma_{12}z^2\\
\nonumber \gamma_2&=& \gamma_{21}z+\gamma_{23}z^3+\gamma_{25}z^5\\
\nonumber \gamma_3&=& \gamma_{31}z+\gamma_{33}z^3\\
\nonumber \gamma_4&=& \gamma_{40}+\gamma_{42}z^2+\gamma_{44}z^4,
\end{eqnarray}
 with
\begin{eqnarray*}
&&\gamma_{10}=-2\tau AA'+\frac{\tau'}{2}A^2, \gamma_{12}=4\tau\frac{A'}{A}-\tau', \\
&& \gamma_{21}=3A^2\tau^2-A^4-\Sigma A^2, \gamma_{23}=7A^2+3\tau^2-\Sigma, \gamma_{25}=8,\\
&& \gamma_{31}=2\tau^3-6\tau(\frac{A'}{A})^2-\tau(\frac{A'}{A})'-\frac{\tau'A'}{A}-A^2\tau+\frac{\tau''}{2}, \\
&&\gamma_{33}=-8\tau, \gamma_{40}=10A^3A'+\frac{A^2}{2}(\tau\tau'+\Sigma')+(\Sigma-2\tau^2)AA', \\
&&\gamma_{42}=2\tau\tau'-2AA'-\frac{\Sigma'}{2}+\frac{7A'}{A}\tau^2-\frac{A'}{A}\Sigma, \gamma_{44}=-\frac{8A'}{A}.
\end{eqnarray*}
We return to Eq. (\ref{eq:3.29}) by computing the coefficients $P$, $Q$ and $R$ as  polynomials on $z$. From (\ref{eq:3.30}), (\ref{eq:3.31}) and (\ref{eq:3.32}) we  have
\begin{equation}\label{eq:3.47}
 P_1=P_{10}+P_{12}z^2,\quad P_2=P_{21}z+P_{23}z^3,
\end{equation}
with
$P_{10}=2\tau A^2, P_{12}=4\tau, P_{21}=-4AA', P_{23}=4A'/A$,
\begin{equation}\label{eq:3.48}
          Q_1=Q_{11}z,\quad  Q_2=Q_{20}z+Q_{22}z^2+Q_{24}z^4,
\end{equation}
with
$Q_{11}=2\tau\frac{A'}{A}-\frac{\tau'}{2}, Q_{20}=\frac{3}{2}A^4-\frac{\Sigma A^2}{2}, Q_{22}=\frac{9A^2}{2}-\frac{3\tau^2}{2}+\frac{\Sigma}{2}, Q_{24}=-6$, and
\begin{equation}\label{eq:3.49}
          R_1=R_{11}z,\quad R_2=R_{20}z+R_{22}z^2+R_{24}z^4,
\end{equation}
with
$$R_{11}=-\bigg(2\tau\frac{A'}{A}-\frac{\tau'}{2}\bigg), R_{20}=\frac{5}{2}A^4+\frac{\Sigma A^2}{2}, R_{22}=-\frac{A^2}{2}+\frac{3\tau^2}{2}-\frac{\Sigma}{2}, R_{24}=-2.$$
Now we compute $b_{ij}$ and $c_{ij}$ of (\ref{eq:3.35}) and (\ref{eq:3.38}) in terms of the variable $z$. 
\subsection{Computation of $b_{ij}$}

\begin{enumerate}[label=(\roman*)]
\item Computation of $b_{21}$. From (\ref{eq:3.35}) we get
\begin{eqnarray*}
  b_{21} &=& Q_2(\alpha_3-\beta_1)+Q_1(\alpha_4-\beta_2)-P_1(\alpha_2+\beta_4)-P_2(\alpha_1+\beta_3) \\
   &=& \Big(Q_{20}+Q_{22}z^2+Q_{24}z^4\Big)\Big((\alpha_{30}-\beta_{10})+(\alpha_{32}-\beta_{12})z^2\Big) \\
   && +Q_{11}z\Big((\alpha_{41}-\beta_{21})z+(\alpha_{43}-\beta_{23})z^3)+(\alpha_{45}-\beta_{25})z^5\Big) \\
   && -(P_{10}+P_{12}z^2)\Big((\alpha_{20}+\beta_{40})+(\alpha_{22}+\beta_{42})z^2+(\alpha_{24}+\beta_{44})z^4\Big)  \\
   && -(P_{21}z+P_{23}z^3)\Big((\alpha_{11}+\beta_{31})z+(\alpha_{13}+\beta_{33})z^3\Big) \\
   &=& Q_{20}(\alpha_{30}-\beta_{10})-P_{10}(\alpha_{20}+\beta_{40}) \\
   &&+\Big(Q_{20}(\alpha_{32}-\beta_{12})+Q_{22}(\alpha_{30}-\beta_{10})+Q_{11}(\alpha_{41}-\beta_{21})-P_{10}(\alpha_{22}+\beta_{42})\\
   &&-P_{12}(\alpha_{20}+\beta_{40})-P_{21}(\alpha_{11}+\beta_{31})\Big)z^2 \\
   &&+\Big(Q_{22}(\alpha_{32}-\beta_{12})+Q_{24}(\alpha_{30}-\beta_{10})+Q_{11}(\alpha_{43}-\beta_{23})-P_{10}(\alpha_{24}+\beta_{44})\\
   &&-P_{12}(\alpha_{22}+\beta_{42})-P_{21}(\alpha_{13}+\beta_{33})-P_{23}(\alpha_{11}+\beta_{31})\Big)z^4 \\
   &&+\Big(Q_{24}(\alpha_{32}-\beta_{12})+Q_{11}(\alpha_{45}-\beta_{25})-P_{12}(\alpha_{24}+\beta_{44})-P_{23}(\alpha_{13}+\beta_{33})\Big)z^6\\
  &=&b_{210}+b_{212}z^2+b_{214}z^4+b_{216}z^6,
\end{eqnarray*}
where
\begin{eqnarray*}
  b_{216}&=&Q_{24}(\alpha_{32}-\beta_{12})+Q_{11}(\alpha_{45}-\beta_{25})-P_{12}(\alpha_{24}+\beta_{44})-P_{23}(\alpha_{13}+\beta_{33})\nonumber \\
  &=& -6\bigg(20\tau\frac{A'}{A}+5\tau'+4\tau\frac{A'}{A}-\tau'\bigg)+\bigg(2\tau\frac{A'}{A}-\frac{\tau'}{2}\bigg)(12-24) \nonumber\\
  && -4\tau\bigg(-12\frac{A'}{A}-16\frac{A'}{A}\bigg)-4\frac{A'}{A}(-12\tau-16\tau)\\
  &=& 56\tau\frac{A'}{A}-18\tau'.\label{eq:3.50}
\end{eqnarray*}
\item Computation of $b_{22}$. In a similar way, we find
$$b_{22}=b_{221}z+b_{223}z^3+b_{225}z^5+b_{227}z^7+b_{229}z^9.$$
where
$$  b_{229}=72,\qquad 
  b_{227}=-198A^2-28\tau^2-18(\frac{A'}{A})'+28(\frac{A'}{A})^2.$$
  \item Computation of $b_{11}$. We have
$$b_{11}=b_{110}+b_{112}z^2+b_{114}z^4+b_{116}z^6,$$
where
$$  b_{116}=16\tau\frac{A'}{A}+20\tau'.$$
\item Computation of $b_{12}$. Now
$$b_{12}=b_{121}z+b_{123}z^3+b_{125}z^5+b_{127}z^7+b_{129}z^9,$$
where
$$  b_{129}=144,\qquad 
  b_{127}=-228A^2-10\tau^2+20(\frac{A'}{A})'+8(\frac{A'}{A})^2.$$
  \item Computation of $b_{01}$. Now 
$$b_{01}=b_{010}+b_{012}z^2+b_{014}z^4+b_{016}z^6,$$
where
$$  b_{016}=-72\tau\frac{A'}{A}-2\tau'.$$
\item Computation of $b_{02}$. Finally
$$b_{02}=b_{021}z+b_{023}z^3+b_{025}z^5+b_{027}z^7+b_{029}z^9,$$
where
$$  b_{029}=40,\qquad  b_{027}=-2(\frac{A'}{A})'-36(\frac{A'}{A})^2+36\tau^2-22A^2.$$\end{enumerate}
\subsection{Computation of $c_{ij}$}
\begin{enumerate}[label=(\roman*)]
\item Computation of $c_{21}$. From (\ref{eq:3.38}) we  find
\begin{eqnarray*}
  c_{21} &=& 2P_1P_2+2Q_1Q_2=2\Big(P_{10}+P_{12}z^2\Big)\Big(P_{21}z+P_{23}z^3\Big) \\
   && +2Q_{11}z\Big(Q_{20}+Q_{22}z^2+Q_{24}z^4\Big) \\
   &=& \Big(2P_{10}P_{21}+2Q_{11}Q_{20}\Big)z+\Big(2P_{10}P_{23}+2P_{12}P_{21}+2Q_{11}Q_{22}\Big)z^3\\
   &&  +\Big(2P_{12}P_{23}+2Q_{11}Q_{24}\Big)z^5\\
   &=&c_{211}z+c_{213}z^3+c_{215}z^5,
\end{eqnarray*}
where
\begin{eqnarray*}
  c_{215}&=&2P_{12}P_{23}+2Q_{11}Q_{24}=2\cdot4\tau\cdot\frac{4A'}{A}+2\bigg(\frac{2\tau A'}{A}-\frac{\tau'}{2}\bigg)(-6)\\
  &=&\frac{8\tau A'}{A}+6\tau'. 
\end{eqnarray*}
\item Computation of $c_{22}$. In a similar way we obtain
$$c_{22}=c_{220}+c_{222}z^2+c_{224}z^4+c_{226}z^6+c_{228}z^8,$$
$$c_{228}=36,\qquad
  c_{226}=6(\frac{A'}{A})'+4(\frac{A'}{A})^2-4\tau^2-54A^2.$$
\item Computation of $c_{11}$. We have $c_{11}=c_{111}z+c_{113}z^3+c_{115}z^5$, 
where
$$   c_{115}=16\tau\frac{A'}{A}-4\tau',$$
\item Computation of $c_{12}$. We have 
$$c_{12}=c_{120}z+c_{122}z^2+c_{124}z^4+c_{126}z^6+c_{128}z^8,$$
where
$$  c_{128}=24,\qquad  
  c_{126}=-4(\frac{A'}{A})'+8(\frac{A'}{A})^2-8\tau^2-12A^2.$$
  \item Computation of $c_{01}$. For $c_{01}$ we have $c_{01}=c_{011}z+c_{013}z^3+c_{015}z^5$, 
with
$$   c_{015}=-24\tau\frac{A'}{A}-2\tau'.$$
\item Computation of $c_{02}$. Finally, $c_{02}=c_{020}z+c_{022}z^2+c_{024}z^4+c_{026}z^6+c_{028}z^8$, 
where
$$  c_{028}=4,\qquad 
  c_{026}=-2(\frac{A'}{A})'-12(\frac{A'}{A})^2+12\tau^2+2A^2.$$
  \end{enumerate}

Once obtained $b_{ij}$ and $c_{ij}$, we utilize them in (\ref{eq:3.40}), (\ref{eq:3.41}) and (\ref{eq:3.42}) to get the values $\kappa_i$, $\lambda_i$ and $\mu_i$ and finally, put all this information in  (\ref{eq:3.43}).  In what follows,  we only indicate the powers of $z$ of higher exponents. 
\begin{enumerate}
\item Computation of $\kappa_1$ and $\kappa_2$. From (\ref{eq:3.40}) we have
\begin{eqnarray*}
  \kappa_1 &=& b_{22}c_{01}+b_{21}c_{02}-b_{01}c_{22}-b_{02}c_{21} \\
   &=& \Big(b_{221}z+b_{223}z^3+b_{225}z^5+b_{227}z^7+b_{229}z^9\Big)\Big(c_{011}z+c_{013}z^3+c_{015}z^5\Big)  \\
   && +\Big(b_{210}+b_{212}z^2+b_{214}z^4+b_{216}z^6\Big)\Big(c_{020}+c_{022}z^2+c_{024}z^4+c_{026}z^6\\
   && +c_{028}z^8\Big)-\Big(b_{010}+b_{012}z^2+b_{014}z^4+b_{016}z^6\Big)\Big(c_{220}+c_{222}z^2+c_{224}z^4\\
   && +c_{226}z^6+c_{228}z^8\Big)-\Big(b_{021}z+b_{023}z^3+b_{025}z^5+b_{027}z^7+b_{029}z^9\Big)\Big(c_{211}z\\
   && +c_{213}z^3+c_{215}z^5\Big)\\
   &=&\ldots+\kappa_{110}z^{10}+\kappa_{112}z^{12}+\kappa_{114}z^{14},
\end{eqnarray*}where
$$\kappa_{114}=b_{229}c_{015}+b_{216}c_{028}-b_{016}c_{228}-b_{029}c_{215}.$$
Substituting in the last equation the preceding calculations we find
\begin{equation}\label{eq:3.68}
  \kappa_{114}= 768\tau\frac{A'}{A}-384\tau'.
\end{equation}
In a similar way we obtain $\kappa_{2}=\ldots+\kappa_{213}z^{13}+\kappa_{215}z^{15}+\kappa_{217}z^{17}$ where
\begin{eqnarray}\label{eq:3.69}
  \kappa_{215}&=&b_{227}c_{028}+b_{229}c_{026}-b_{027}c_{228}-b_{029}c_{226}\\
\nonumber  &=&-384(\frac{A'}{A})'+384(\frac{A'}{A})^2+2304A^2-384\tau^2\\
   \kappa_{217}&=&b_{229}c_{028}-b_{029}c_{228}=-1152.\label{eq:3.70}
\end{eqnarray}
\item Computation of $\lambda_1$ and $\lambda_2$. We have $\lambda_{1}=\ldots+\lambda_{110}z^{10}+\lambda_{112}z^{12}+\lambda_{114}z^{14}$,
with
\begin{equation}\label{eq:3.71}
  \lambda_{114}= b_{016}c_{128}+b_{029}c_{115}-b_{116}c_{028}-b_{129}c_{015}=2304\tau\frac{A'}{A}.
\end{equation}
Also we have $\lambda_{2}=\ldots+\lambda_{213}z^{13}+\lambda_{215}z^{15}+\lambda_{217}z^{17}$, with
\begin{eqnarray}\label{eq:3.72}
  \lambda_{215}&=& b_{027}c_{128}+b_{029}c_{126}-b_{127}c_{028}-b_{129}c_{026}\\
\nonumber  &=&1152(\frac{A'}{A})^2-1144\tau^2-384A^2\\
  \lambda_{217}&=& b_{029}c_{128}-b_{129}c_{028}=384.\label{eq:3.73}
\end{eqnarray}
\item Computation of $\mu_1$ and $\mu_2$. Now  $\mu_{1}=\ldots+\mu_{110}z^{10}+\mu_{112}z^{12}+\mu_{114}z^{14}$, with
\begin{eqnarray}\label{eq:3.74}
  \mu_{114}&=& b_{116}c_{228}+b_{129}c_{215}-b_{216}c_{128}-b_{229}c_{115}\\
\nonumber  &=&-768\tau\frac{A'}{A}+2304\tau'.
\end{eqnarray}
For $\mu_2$, we have   $\mu_{2}=\ldots+\mu_{213}z^{13}+\mu_{215}z^{15}+\mu_{217}z^{17}$, with
\begin{eqnarray}\label{eq:3.75}
  \mu_{215}&=& b_{127}c_{228}+b_{129}c_{226}-b_{227}c_{128}-b_{229}c_{126}\\
\nonumber  &=&2304(\frac{A'}{A})'-384(\frac{A'}{A})^2+312\tau^2-10368A^2\\
  \mu_{217}&= &b_{129}c_{228}-b_{229}c_{128}=3456.\label{eq:3.76}
\end{eqnarray}
\end{enumerate}
The three leading terms of Eq.  (\ref{eq:3.43}) are $z^{68}, z^{66}$ and $z^{64}$ with corresponding coefficients
$$z^{68}\rightarrow (\kappa^2_{217}-\lambda_{217}\mu_{217})^2,$$
$$ z^{66}\rightarrow 2(\kappa^2_{217}-\lambda_{217}\mu_{217})(2\kappa_{215}\kappa_{217}-\lambda_{215}\mu_{217}-\lambda_{217}\mu_{215}),$$
and
\begin{eqnarray*}
 z^{64}\rightarrow &&(2\kappa_{215}\kappa_{217}-\lambda_{215}\mu_{217}-\lambda_{217}\mu_{215})^2+(\lambda_{114}\mu_{217}+\lambda_{217}\mu_{114}-2\kappa_{114}\kappa_{217}) ^2 \\
 &&+2(\kappa^2_{217}-\lambda_{217}\mu_{217})(-\kappa^2_{114}+\lambda_{114}\mu_{114}+\kappa^2_{115}\\
 &&+2\kappa_{213}\kappa_{217}-\lambda_{213}\mu_{217}-\lambda_{217}\mu_{213}-\lambda_{215}\mu_{215}).
\end{eqnarray*}

Since $\kappa_{217}=-1152, \lambda_{217}=384$ and $\mu_{217}=3456$ we have $\kappa^2_{217}-\lambda_{217}\mu_{217}=0$, and thus the coefficients of $z^{66}$ and $z^{68}$ are zero trivially. However the coefficient of $z^{64}$ becomes
$$(2\kappa_{215}\kappa_{217}-\lambda_{215}\mu_{217}-\lambda_{217}\mu_{215})^2+(\lambda_{114}\mu_{217}+\lambda_{217}\mu_{114}-2\kappa_{114}\kappa_{217})^2=0.$$
So we deduce
\begin{equation}\label{eq:3.77}
2\kappa_{215}\kappa_{217}-\lambda_{215}\mu_{217}-\lambda_{217}\mu_{215}=0
\end{equation}
and
\begin{equation}\label{eq:3.78}
\lambda_{114}\mu_{217}+\lambda_{217}\mu_{114}-2\kappa_{114}\kappa_{217}=0.
\end{equation}
Taking into account (\ref{eq:3.68})-(\ref{eq:3.76}), Equation  (\ref{eq:3.77}) gives
\begin{eqnarray*}
\nonumber  &-&3^2\cdot2^8\Big(-384(\frac{A'}{A})'+384(\frac{A'}{A})^2+2304A^2-384\tau^2\Big)\\
  &-&3^3\cdot2^7\Big(1152(\frac{A'}{A})^2-384A^2-1144\tau^2\Big)\\
\nonumber  &-&3\cdot2^7\Big(2304(\frac{A'}{A})'-384(\frac{A'}{A})^2-10368A^2+312\tau^2\Big)=0
\end{eqnarray*}
that is,
$$-384\cdot2^7\cdot3\cdot32(\frac{A'}{A})^2+3^2\cdot2^7\cdot4096\tau^2=0.$$
Thus $$
\tau^2=(\frac{A'}{A})^2.$$
In a similar way  we obtain from (\ref{eq:3.78})
$$\tau\frac{A'}{A}=0.$$
The last two equations yield $\tau=0$, a contradiction, since in this case we have $K=0$, by Prop. 3.2. Hence, we have proved that there are no translation surfaces in $\R^3$ with constant non zero Gaussian curvature and this completes the proof of the Th. \ref{t1}.


\end{document}